\documentclass[12pt,a4paper]{article}

\usepackage{amssymb, amsmath, amsthm}
\usepackage{pgfplots}
\pgfplotsset{compat=1.18}

\usepackage[cm]{fullpage}
\usepackage[english]{babel}
\usepackage[T1]{fontenc}
\usepackage{graphicx}
\usepackage{booktabs}
\usepackage{xcolor}

\usepackage{tikz}
\usetikzlibrary{arrows.meta,decorations.pathreplacing,calc,positioning,patterns,fit}

\usepackage{float}
\DeclareGraphicsExtensions{.png,.pdf}
\graphicspath{{./images/}}

\usepackage{hyperref}
\usepackage{autonum}

\newtheorem{theorem}{Theorem}

\newtheorem{definition}{Definition}

\title{A Unified Model for Blood and Lymph Flow with Coupled Nonsmooth Biochemical Dynamics}
\author{
Bogna Jaszczak-Dyka$^*$, \and 
{\L}ukasz P{\l}ociniczak\thanks{Faculty of Pure and Applied Mathematics, Wroclaw University of Science and Technology, Wyb. Wyspia\'nskiego 27, 50-370 Wroc{\l}aw, Poland, emails: \texttt{bogna.jaszczak@pwr.edu.pl}, \texttt{lukasz.plociniczak@pwr.edu.pl}}
}
\date{}

\begin{document}
\maketitle

\begin{abstract}

We present a unified mathematical framework for modeling blood and lymph flow in biological vessels, with a particular focus on lymph transport through lymphangions. Starting from first principles, we rigorously derive a system of partial differential equations (PDEs) that govern the fluid dynamics using perturbative methods. To capture the active regulation of lymphangion valves, we couple these PDEs with a system of two nonlinear ordinary non-smooth differential equations (ODEs) describing the chemical kinetics of calcium ions ($Ca^{2+}$) and nitric oxide ($NO$). These biochemical species play a critical role in valve opening and closing, influencing lymph propulsion. We further analyze a reduced model consisting of two non-smooth ODEs, identifying parameter regimes that guarantee the existence of a stable limit cycle. This oscillatory behavior aligns with experimental observations of lymphatic pumping, providing theoretical validation and new insights into lymphatic physiology. Our results offer a comprehensive mathematical description of lymph flow regulation and open possibilities for future studies on pathological conditions and therapeutic interventions.\\
    
\noindent\textbf{Keywords}: Physiological fluid mechanics, Blood and lymph circulation, Non-smooth dynamical systems, Limit cycle, Perturbation theory\\

\noindent\textbf{AMS Classification}: 35Q92, 34A36, 92C35
\end{abstract}

\section{Introduction}
The lymphatic system constitutes a vital network of vessels, nodes, and lymphoid organs that maintains tissue fluid homeostasis by returning excess interstitial fluid and macromolecules to the venous circulation, while simultaneously supporting the immune system and lipid transport \cite{choi2012new}. Beyond its classical role in preventing edema and maintaining optimal tissue perfusion, lymphatic function is now recognized as critical in inflammation and the pathophysiology of major conditions, including cardiovascular disease, cancer, obesity, and autoimmunity \cite{escobedo2017lymphatic,mehrara2023emerging}. Despite these fundamental contributions to human health, the lymphatic system has historically been overshadowed by the high-pressure cardiovascular system. Although described alongside blood vessels as early as Hippocrates \cite{grotte1979discovery} (or even as early as 1600 BC in one of the Ancient Egypt hieroglyphs that can be translated as "lymphatic glands swelling" - see the complete historical account in \cite{van2022lymphatic}), the lymphatic system has long been regarded as secondary or invisible and described by some as "arguably the most neglected bodily system" (see \cite{mortimer2014new}) resulting in a scarcity of research until recent molecular and imaging advances triggered a renaissance in the field \cite{mehrara2023emerging,choi2012new,mills2024magnetic}. However, compared to the extensive literature on hemodynamics, mathematical and computational studies of lymph flow remain relatively scarce, motivating the development of dedicated models to describe the complex mechanics of lymphatic transport in both healthy and sick individuals \cite{jayathungage2024computational}.

From a fluid dynamics perspective, the lymphatic system functions as a hierarchical transport network designed to pump fluid against a net pressure gradient from the low-pressure interstitial space to the higher-pressure venous circulation. The network topology begins with the initial lymphatics, which act as blind-ended porous capillaries \cite{Null2025AnatomyLymphaticSystem}. Here, fluid absorption is governed primarily by local transmural pressure differences and mechanical coupling to the surrounding tissue \cite{mchale1976effect}. These capillaries converge into collecting vessels that are structurally segmented into a series of contractile chambers known as lymphangions \cite{wilting2022lymphatic}. Each lymphangion is bounded by non-return valves that ensure unidirectional flow by preventing retrograde motion. Unlike the passive venous system, the transport in collecting vessels is actively driven by the rhythmic contraction and relaxation of the vessel walls that function as a biological pump that generates the necessary pressure to propel the lymph \cite{munn2015mechanobiology, kunert2015mechanobiological}. A substantial malfunction of the pumping mechanism has a profound effect on the overall health of the surrounding tissue and can lead to lymphedema or immune dysfunction \cite{liao2011impaired}. From a mathematical perspective, the lymphatic network can thus be viewed as a distributed, actively pumped conduit system with heterogeneity in geometry, wall mechanics, and valvular structure, all of which shape the spatio-temporal patterns of lymph flow and pressure that models aim to capture.

To model lymphatic transport, it is important to distinguish its physical properties from those of the cardiovascular system, as these differences influence the choice of governing equations and boundary conditions. Physically and hemodynamically, the lymphatic system differs fundamentally from the arterial and venous circulations. While the cardiovascular system forms a closed cycle driven by a central pump (the heart), the lymphatic system is an open linear network that transports fluid from the interstitial space to the central veins \cite{alitalo2002molecular, jafarnejad2015modeling}. The most essential distinction lies in the pressure regimes and flow mechanisms: arterial flow is high-pressure and pulsatile, driven by cardiac systole, and venous flow is lower-pressure and quasi-steady. On the other hand, lymphatic flow is low-pressure (often sub-atmospheric in initial vessels), pulsatile, intermittent, and oscillatory, driven by intrinsic contractions and stochastic external compression \cite{moore2018lymphatic}. Structurally, unlike the thick elastic walls of arteries designed to withstand high shear stresses, lymphatic vessels possess thin, highly compliant walls that are coupled to the extracellular matrix, making them sensitive to deformation \cite{breslin2019lymphatic}. Furthermore, both fluids differ substantially. Although blood is a non-Newtonian suspension of red blood cells that causes shear-thinning flow \cite{alexy2022physical}, lymph is generally a protein-rich fluid with a much lower concentration of cells (primarily lymphocytes), often allowing the approximation of Newtonian behavior in larger collecting vessels \cite{santambrogio2018lymphatic}. For readers convenience, we collect some typical physical parameters of the blood and lymph in Tab. \ref{tab:BloodLymph}.

As mentioned above, the lymphatic system does not have its own external pumping mechanism and the cardiovascular system. Lymph flow through the lymphangion is driven by contractions of lymphatic muscle cells, which generate synchronized pressure pulses to open distal valves and eject fluid while closing the proximal valves to prevent backward flow. This pulsatile mechanism is governed by a complex mechano-chemical oscillator that couples intracellular calcium ($Ca^{2+}$) concentration with transients of nitric oxide (NO) \cite{munn2015mechanobiology}. The contractile phase (systole) is driven by voltage-gated $Ca^{2+}$ entry during depolarization, which triggers actin-myosin cross-bridge cycling and vessel constriction \cite{scallan2016lymphatic}. On the other hand, the relaxation phase (diastole) is actively modulated by a flow-dependent negative feedback loop: elevated wall shear stress during ejection activates endothelial nitric oxide synthase (eNOS) to produce NO \cite{kunert2015mechanobiological,ohhashi2023physiological}. This dynamic interplay establishes self-regulating pumping that enables the lymphatic system to maintain transport against adverse pressure gradients. A detailed account of the biomechanics of lymphatic flow control can be found in \cite{angeli2023biomechanical}. One of the goals of this paper is to understand this intrinsic pumping mechanism mathematically and identify the physiological parameter regimes leading to sustained (relaxation-)oscillations. 

The literature on mathematical modeling of the lymph flow is just emerging; however, there are several notable papers that set the overall approach (for a thorough review, see \cite{jayathungage2024computational,margaris2012modelling}). Models of lymph flow span from early lumped-parameter descriptions of single segments to recent image-based network-scale simulations, reflecting increasing physiological detail and computational complexity. Pioneering work by Reddy and Patel \cite{reddy1995mathematical} modeled flow through terminal lymphatics using mechanics-based relations between transmural pressure, vessel deformation, and valve resistance, establishing a 1D Poiseuille framework with compliant walls and nonlinear valve characteristics. Subsequent multi-lymphangion models represented collecting vessels as chains of actively contracting chambers separated by valves \cite{bertram2014development}. More recent contributions extend these ideas in several directions: reduced-order 1D models that incorporate electric pacemaking, NO-$Ca^{2+}$ feedback, and spatially distributed valves to study transport and wave propagation along vessels \cite{sedaghati20231d,kunert2015mechanobiological,contarino2018one,li2024fluid}, Darcy–Brinkman formulations for flow through porous lymph node \cite{giantesio2021model}, and multiscale CFD and network models \cite{girelli2024multiscale}.

We are interested in providing a mathematical model that describes the dynamical fluid behavior of the lymphatic fluid flowing through the lymphangion between systole and diastole. We couple the reduced Navier-Stokes equation with the chemical kinetics to derive governing equations that describe the dynamics of the area of the cross-section of the lymphangion (assumed axisymmetric), the fluid flux through it, and the concentrations of $Ca^{2+}$ and NO. The opening/closing of the valves is then triggered by the activation function that couples the mechanical and biochemical properties of the lymphangion. Although the lymph flow model has been present in the literature for many years \cite{reddy1995mathematical}, we derive it from first physical principles in a unified hemodynamical framework and couple it with the $Ca^{2+}$-NO kinetics. The main novel contributions of our research presented in this paper can be summarized as follows.
\begin{enumerate}
	\item A unified approach to systematic derivation of the lymph and blood flow model.
	\item Mathematical coupling of the intrinsic mechanics of lymphangion with the chemical kinetics of calcium ions and nitric oxide. 
	\item A detailed bifurcation analysis of the resulting non-smooth dynamical system yielding exact parameter regimes for which there exists a limit cycle.
\end{enumerate}
As the reader will see below, we provide a thorough analysis of the system of two non-smooth ODEs that represent the dynamics of $Ca^{2+}$ and NO. This system is analyzed under the simplifying assumption that changes in the radius of the lymphangion are negligible. Even under this restriction, we show that the model exhibits oscillatory behavior, verifying its explanatory potential of the physiological aspects of a lymphangion. The coupling between fluid mechanical and biochemical models leads to interesting nonlocal equations, which will be the subject of our subsequent work. Our model is robust and complex leaving a lot of space to investigate and conduct further research. 

This paper is structured as follows. In the next section, we derive the flow model starting from the Navier-Stokes equations, assuming radial symmetry, and using lubrication approximation. The model can describe the flow of both blood and the lymph however, we focus only on the latter. In Section 3 we discuss the biochemical oscillations driving the opening and closing of the lymphangion's valves. Since this is the most important mechanism that governs the behavior of the lymph flow at this level, we uncouple it from the fluid dynamical equation and show that such a simple system of two non-smooth ODEs can predict relaxation-oscillations. Our future work will focus on the analysis of the fully coupled model and its descriptive capabilities. 

\begin{table}
	\centering
	\begin{tabular}{cccc}
		\toprule  
		parameter & artery & vein & collecting lymphatic vessel \\ [0.5ex] 
		\midrule
        vessel radius $R_0$ & 1-15 mm \cite{muller2014global} & 0.8-8 mm \cite{muller2014global} & 0.05-1.1 mm \cite{contarino2018one} \cite{moore2018lymphatic}\\
        characteristic length $L$ & 200 mm \cite{muller2014global} & 300 mm \cite{muller2014global} & 3 mm \cite{jamalian2016network} \\
        flow velocity $V_x$ & 4.9–19 cm $s^{-1}$ \cite{klarhofer2001high} & 1.5–7.1 cm $s^{-1}$ \cite{klarhofer2001high} & 0.09-0.9 cm $s^{-1}$ \cite{zawieja2009contractile}\\
        pressure $P$ &   9.47-14.67 kPa \cite{woloszyn2012retrospective} &  0.67-1.33 kPa \cite{KAMATH2018233}&  0.5-0.8 kPa \cite{macdonald2008modeling}\\
        Reynolds number $Re$ & 1-4000 \cite{ku1997blood} & 13-360 \cite{saleem2023assessment}& 0.045-16 \cite{moore2018lymphatic} \\
        fluid density $\rho$ & 1060 kg $m^{-3}$ \cite{anliker1971nonlinear} & 1060 kg $m^{-3}$ \cite{anliker1971nonlinear} & 998 kg $m^{-3}$ \cite{macdonald2008modeling} \\
        fluid dynamic viscosity $\mu$ & 3.5-5.5 cP \cite{nader2019blood} & 3.5-5.5 cP \cite{nader2019blood} & 1 cP \cite{bertram2011chain}\\
		\\ [1ex] 
		\bottomrule
	\end{tabular}
    \label{tab:BloodLymph}
    \caption{Comparison between physical properties of blood (arteries, vein) and lymph (collecting lymphatic vessels). }
\end{table}

\section{Derivation of the flow model}
\subsection{Navier-Stokes equations and the constitutive law}
Although we will mainly be concerned with lymph flow, the derivation below is also equally valid for blood with a possible change in the rheology model. Quite a similar reasoning was given in \cite{smith2002anatomically}. The main assumptions we make are the following.
\begin{itemize}
	\item The flow is axisymmetric and laminar.
	\item The fluid is Newtonian. 
	\item The wall of the vessel responds dynamically to the flow with possible surface tension. 
	\item The flow is mainly driven by the pressure gradient. 
\end{itemize}
For reference, all quantities present in the model along with their numerical values are summarized in the Tab. \ref{tab:BloodLymph}. 

\begin{figure}
\centering
    \begin{tikzpicture}[scale=1.2, >=Stealth]
    
    \draw[->] (-0.2,0) -- (8.5,0) node[below right] {$x$ (axial)};
    \draw[->] (0,-0.2) -- (0,3.8) node[left] {$r$ (radial)};
    \node[below left] at (0,0) {centreline};
    
    \coordinate (A) at (0,0.9);
    \coordinate (B) at (1.8,1.4);
    \coordinate (C) at (3.6,2.6);
    \coordinate (D) at (5.2,1.9);
    \coordinate (E) at (7.4,1.1);
    
    \draw[very thick, domain=0:7.4, smooth, samples=200]
    plot coordinates {(0,0.9) (1.8,1.4) (3.6,2.6) (5.2,1.9) (7.4,1.1)};
    
    \draw[pattern=north east lines, pattern color=black!25] 
    plot coordinates {(0,0) (0,0.9) (0.8,1.05) (1.8,1.35) (3, 2.25) (3.6,2.6) (4.2,2.45) (5.2,1.85) (7.4,1.1) (7.4,0)}
    -- cycle;
    
    \coordinate (x0) at (3.2,0);                 
    \coordinate (Rpoint) at (3.2,2.38);         
    \draw[dashed] (x0) -- (Rpoint) node[midway, right] {};
    \draw (x0) node[below, yshift = -0.5] {$x_0$};
    \node[above, yshift = 3.5] at (Rpoint) {$\bigl(x_0,\,R(x_0,t)\bigr)$};
    \node[right] at ($(x0)!0.5!(Rpoint)$) {$R(x_0,t)$};
    
    \draw[->, thick] (0.8,0.6) -- ++(0.9,0.07) node[above, midway] {$u$};
    \draw[->, thick] (3.0,1.4) -- ++(1.0,0.05) node[above, midway] {$u$};
    \draw[->, thick] (5.0,0.9) -- ++(1.0,-0.02) node[below, midway] {$u$};
    
    \draw[->, thick, black] (3.45,1.95) -- ++(0.15,0.20) node[above right] {$v$};
    
    \draw[->, densely dashed] (6.4,2.6) arc (0:270:0.28) node[left=6pt] {$\theta$};
    
    \begin{scope}[shift={(9.2,2.5)}]
    \draw[thick] (0,0) circle (1.2) node[right=1.6cm] {cross-section at $x_0$};
    \draw[->] (1.2,0) arc (0:50:1.2) node[midway, right] {$\theta$};
    \draw[->, thick] (0,0) -- (1.2,0) node[midway, below, xshift = -1.2] {$R(x_0,t)$};
    \end{scope}
    
    \node[align=left, font=\small] at (4.8,3.5) {Dynamic boundary: $r = R(x,t)$};
    
    \end{tikzpicture}
    \caption{A diagrammatic cross-section of the tube. }
    \label{fig:CrossSection}
\end{figure}
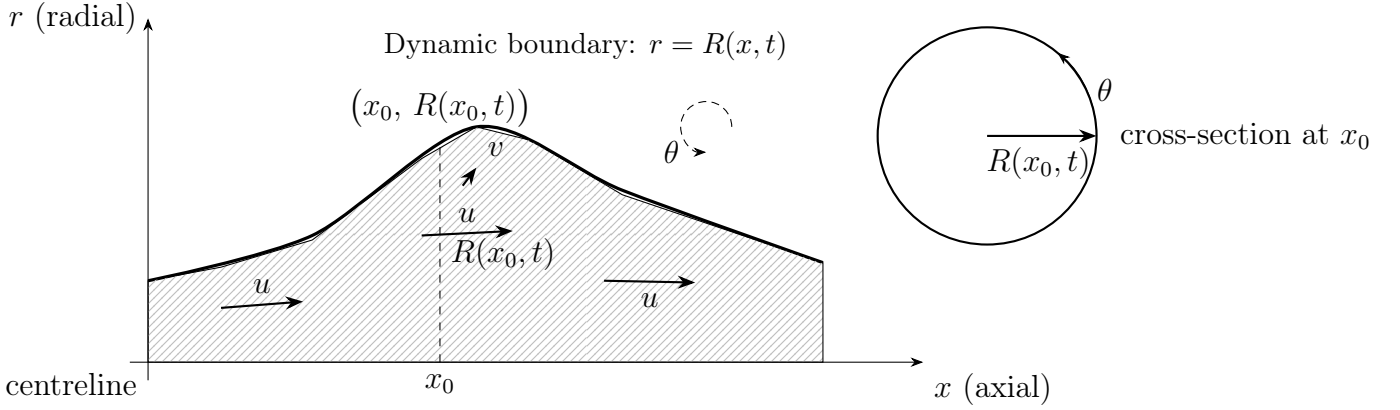

Consider a viscous flow through an axisymmetric tube with dynamic boundary (see Fig. \ref{fig:CrossSection}). Due to the setting, it is natural to write the Navier-Stokes equations in cylindrical coordinates $(r,\theta,x)$ and denote the profile of the vessel by $R=R(x,t)$ (that is, the radius of an instantaneous circle with an origin at $(0, 0, x)$). For both blood and lymph, the Reynolds number is usually small (see Tab. \ref{tab:BloodLymph}) and thus the flow is laminar. Due to symmetry, we also assume that the velocity is \emph{independent} of the angle $\theta$. If $u$ and $v$ denote the axial and radial components of the velocity, the conservation of momentum and mass give
\begin{equation}\label{eqn:NSDim}
	\begin{cases}
		u_t + u u_x + v u_r = - \dfrac{1}{\rho} p_x + \nu \left(u_{rr}+ \dfrac{1}{r} u_r + u_{xx}\right), \vspace{6pt}\\
		v_t + u v_x + v v_r = - \dfrac{1}{\rho} p_r + \nu \left(v_{rr}+\dfrac{1}{r} v_r - \dfrac{v}{r^2} + v_{xx}\right), \vspace{6pt}\\
		u_x + \frac{1}{r} \left(r v\right)_r = 0, \\
	\end{cases}
\end{equation}
where subscripts denote partial differentiation, $\rho$ is the density of the fluid, $p$ is the pressure, and $\nu$ is the kinematic viscosity. We are assuming that all biological fluids we consider are Newtonian, which can be a very good approximation to the lymph. However, blood is more complex due to the large relative size of red blood cells. Because of that, it is sometimes modeled by a non-Newtonian fluid of power-type rheology (for example, Carreau). However, in many situations, a constant viscosity model is adequate and sufficiently accurate. As mentioned above, the flow is axisymmetric and we subject it to the no-slip boundary condition, hence
\begin{equation}\label{eqn:Boundary}
	\begin{cases}
		u = 0, \quad v = 0, & \text{for } r = R, \\
		u_r = 0, & \text{for } r = 0. \\
	\end{cases}
\end{equation}
Moreover, since the fluid cannot penetrate the boundary $r = R(x,t)$ we have the \textit{kinematic boundary condition}
\begin{equation}\label{eqn:BoundaryKinametic}
	R_t + u R_x = v \quad \text{for} \quad r = R(x,t). 
\end{equation}

The above system \eqref{eqn:NSDim} of the equation must be provided by a constitutive law that describes the response of the transmural pressure $p$ to the flow (for some other models see \cite{contarino2018one})
\begin{equation}\label{eqn:PressureRadius}
	p - p_0 = G\; \phi\left(\frac{R}{R_0}\right) - T R R_{xx},
\end{equation}
where $G>0$ is the stretching pressure amplitude, $R_0$ is the reference value of the radius of the vessel (say, in a typical unstretched state) and $T$ is the tube tension. The function $\phi$ denotes the response of the vessel wall to the applied pressure. In the simplest, but still adequate and accurate, possible scenario, the precise form of this pressure-radius relation can be found empirically as a power law (usually stated in terms of the radius $A=\pi R^2$)
\begin{equation}\label{eqn:PressureRadiusPhi}
	\phi(z) = z^\gamma-1, \quad \gamma > 0.
\end{equation}
In particular, by the Laplace law, we would have $\gamma = 1$. For lymphangions, similarly to Rahbar et al. \cite{rahbar_pressure_diameter}, we can use the pressure-radius relation of the form
\begin{equation}\label{eqn:RahbarPressure-Radius}
    \phi(z) = P_0 \left(\exp(S_p(z-z_0)) + \alpha z^{-3} - \beta \right),
\end{equation}
The exponential term accounts for the rapid growth in the pressure values for a large radius, while $z^{-3}$ dominates for a smaller radius. Another approach is to use the reciprocal function instead of the exponent. In such a case, the pressure-radius relation is of the form:
\begin{equation}\label{eqn:ReciprocalPressure-Radius}
    \phi(z) = \frac{\gamma}{z_0 - \lambda z} - \alpha z^{-3} + \beta,
\end{equation}
where all the parameters can be found by fitting the least-squares to the real data. The comparison of data and fitted curves is presented in Figure \ref{fig:curvePressureRadius}.



\begin{figure}
    \centering
    \includegraphics[width=0.8\textwidth]{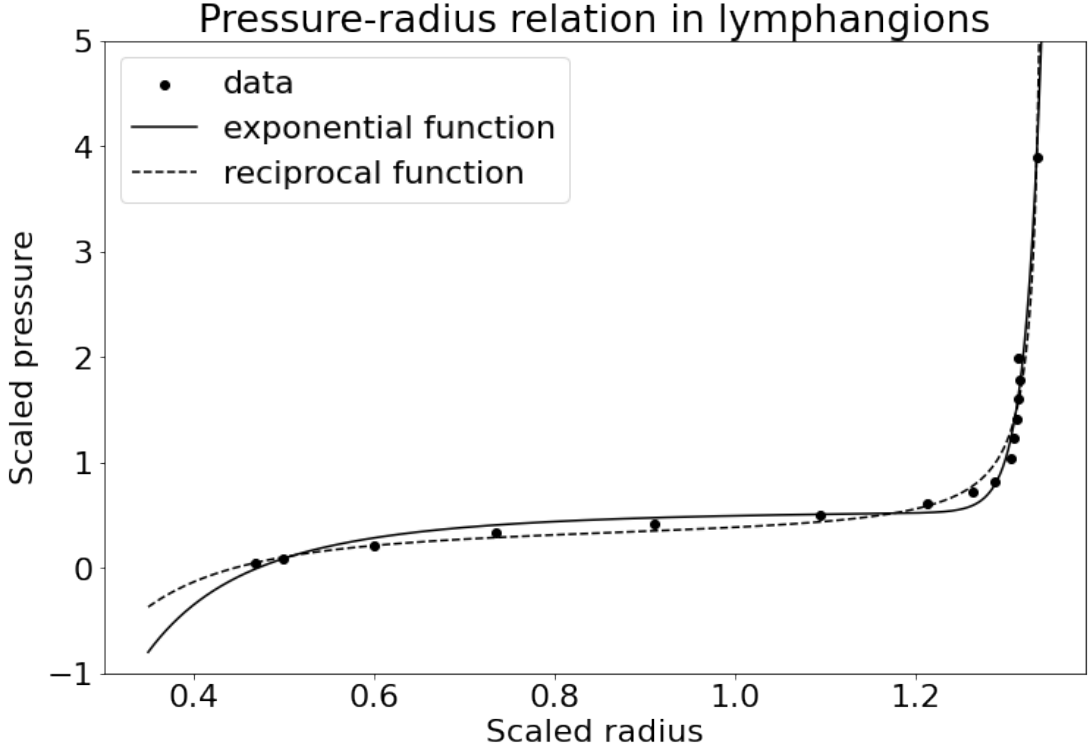}
    \caption{Curves \eqref{eqn:RahbarPressure-Radius}, \eqref{eqn:ReciprocalPressure-Radius} fitted to data describing pressure-radius relation.}
    \label{fig:curvePressureRadius}
\end{figure}

\subsection{Nondimensionalization and the lubrication approximation}
The next step in determining the flow model is to scale all the quantities appearing in \eqref{eqn:NSDim}. We choose the following
\begin{equation}\label{eqn:Scales}
	x \sim L, \quad r \sim R_0, \quad t \sim \frac{L}{V_x}, \quad u \sim V_x, \quad v \sim V_r, \quad p \sim P,
\end{equation}
where the typical values of the scales are given in Tab. \ref{tab:BloodLymph}. As can be seen, we have chosen the convective time scale $L/V_x$ (note that in pulsatile flows of angular frequency $\omega$ we would have chosen $t \sim 1/\omega$). Many experiments indicate that both for blood and lymphatic, the flow is essentially one-dimensional and the \textit{aspect ratio} is small, that is,
\begin{equation}\label{eqn:AspectRatio}
	\epsilon := \frac{R_0}{L} \ll 1.
\end{equation}
For example, for arteries $\epsilon \approx 0.04$, veins $\epsilon \approx 0.01 $, and lymphangions $\epsilon \approx 0.02$. In all of these cases, we can see that $\epsilon\ll 1$. Since the conservation of mass has to be retained in the same form from \eqref{eqn:NS} we have
\begin{equation}
	\frac{V_x}{L} u_x + \frac{V_r}{R_0} \frac{1}{r} \left(r v\right)_r = 0 \quad \implies \quad \frac{V_r}{V_x} = \frac{R_0}{L} = \epsilon,
\end{equation}
where, \textit{with the usual abuse of the notation}, we retained the same notation as before. Thus, the velocity components scale in the same way as the geometrical properties of the flow. 

We, thus, have a shallow flow which is governed by a \textit{nondimensional} system in which, 
\begin{equation}\label{eqn:NS}
	\begin{cases}
		u_t + u u_x + v u_r = - \dfrac{P}{\rho V_x^2} p_x + \dfrac{\nu L}{V_x R_0^2} \left(u_{rr}+ \dfrac{1}{r} u_r + \epsilon^2 u_{xx}\right), \vspace{6pt}\\
		\epsilon^2 \left(v_t + u v_x + v v_r \right) = - \dfrac{P}{\rho V_x^2} p_r + \epsilon^2\dfrac{\nu L}{V_x {R_0}^2} \left(v_{rr}+\dfrac{1}{r} v_r - \dfrac{v}{r^2} + \epsilon^2 v_{xx}\right), \vspace{6pt}\\
		u_x + \dfrac{1}{r} \left(r v\right)_r = 0. \\
	\end{cases}
\end{equation}
Now, in slender tubes: arteries, veins, and lymphangions, the flow is essentially driven by the pressure, so that we can determine the natural pressure scale 
\begin{equation}
	\dfrac{P}{\rho V_x^2} = \dfrac{\nu L}{V_x R_0^2} \quad \implies P = \frac{\rho V_x^2}{ \epsilon^2 Re} \quad \text{where the Reynolds number is} \quad Re := \frac{L V_x}{\nu}, 
\end{equation}
which essentially is the \textit{lubrication scaling}. Note that we could also have defined the Reynolds number as is done in pipe flow theory by $Re_r := R_0 V_x/\nu = \epsilon Re$. In this way, our equations become
\begin{equation}\label{eqn:NSScaled}
	\begin{cases}
		\epsilon^2 Re\left(u_t + u u_x + v u_r\right) = - p_x + \dfrac{1}{r}\left(r u_r\right)_r + \epsilon^2 u_{xx}, \vspace{6pt}\\
		\epsilon^4 Re \left(v_t + u v_x + v v_r \right) = - p_r + \epsilon^2 \left(v_{rr}+\dfrac{1}{r} v_r - \dfrac{v}{r^2} + \epsilon^2 v_{xx}\right), \vspace{6pt}\\
		u_x + \dfrac{1}{r} \left(r v\right)_r = 0. \\
	\end{cases}
\end{equation}
In addition to the dynamic equation, we also have to scale the constitutive one \eqref{eqn:PressureRadius}
\begin{equation}\label{eqn:PressureRadiusScaled}
	p - \epsilon^2 Re\frac{p_0}{\rho V_x^2} = \tau \phi(R) - \sigma RR_{xx},
\end{equation}
where the \textit{tube number}, the \textit{capillary number} $Ca$, and its scaled form $\sigma$ are defined by
\begin{equation}
	\tau := \epsilon^2 Re\frac{G}{\rho V_x^2}, \quad \sigma := \frac{\epsilon^4}{Ca}, \quad Ca := \frac{\rho \nu V_x}{T L}.
\end{equation}
Typical values of these quantities are as follows: $\tau = 32.7$, $\sigma=3.48 \times 10^{-3}$, $Ca=2.21 \times 10^{-5}$. The system \eqref{eqn:NSScaled} together with the constitutive equation \eqref{eqn:PressureRadiusScaled} is equivalent to the original flow equations \eqref{eqn:NS}. 

\subsection{The leading-order model}
Taking a leading-order approximation $\epsilon = 0$, we can very quickly obtain a closed model for the first approximation of the flow. For then, from \eqref{eqn:NSScaled} we have
\begin{equation}\label{eqn:NSScaledLeading}
	\begin{cases}
		p_x = \dfrac{1}{r}\left(r u_r\right)_r, \vspace{6pt}\\
		p_r = 0, \vspace{6pt}\\
		u_x + \dfrac{1}{r} \left(r v\right)_r = 0, \\
	\end{cases}
\end{equation}
with boundary conditions \eqref{eqn:Boundary}. These are typical lubrication equations with $r$-independent pressure and horizontal Poiseuille flow
\begin{equation}\label{eqn:Poiseuille}
	u(r,x,t) = -\frac{p_x(x,t)}{4} R^2 \left(1-\left(\frac{r}{R}\right)^2\right),
\end{equation}
where the pressure gradient is calculated from \eqref{eqn:PressureRadiusScaled} 
\begin{equation}
	p_x = \tau \phi'(R) R_x - \sigma (R_x R_{xx} + RR_{xxx}).
\end{equation}
Note that we have retained the tube tension term as in some situations it may have an influence on the flow \cite{macdonald2008modeling}. Now, we can integrate the conservation of mass equation in \eqref{eqn:NSScaledLeading} to obtain
\begin{equation}
	0 = \int_0^{R(x,t)} r u_x dr + R(x,t) v|_{r=R(x,t)} = \left(\int_0^{R(x,t)} r u dr\right)_x - R(x,t) R_x(x,t)u|_{r=R(x,t)}  + R(x,t) v|_{r=R(x,t)}.
\end{equation}
From the kinematic boundary condition \eqref{eqn:BoundaryKinametic} we can simplify to obtain
\begin{equation}
	R R_t + \left(\int_0^{R(x,t)} r u dr\right)_x = 0.
\end{equation}
Finally, since the velocity profile is known to be Poiseuille \eqref{eqn:Poiseuille} we can explicitly compute the integral and obtain the \textit{leading-order equation} for the radius of the vessel
\begin{equation}\label{eqn:leadingOrderRadius}
	R R_t = \frac{1}{16} \left(R^4 \left(\tau \phi'(R) R_x-\sigma R_x R_{xx} - \sigma RR_{xxx}\right)\right)_x,
\end{equation}
which is a nonlinear fourth-order dispersive diffusion equation. For boundary conditions, it is natural to prescribe both the value of the radius and the pressure gradient at each end of the vessel, that is,
\begin{equation}
    R(x,t) = R_{1,2}(t), \quad \tau \phi'(R(x,t)) R_x(x,t) - \sigma \Bigl(R(x,t) R_{xx}(x,t)\Bigr)_x  = (p_x)_{1,2}(t) \quad \text{for} \quad x = 0,1,
\end{equation}
which, together with a suitable initial condition, makes the problem well-posed. 

\subsection{Averaging the Navier-Stokes equations}
Note that the leading-order model assumes a steady flow of the fluid. To gain more insight into the transient features and for the completeness of our modeling, we have to go back to \eqref{eqn:NSScaled} and retain the left-hand side of the axial velocity but cancel the $\epsilon^2$ term assuming that $Re \approx O(10)$. From the radial velocity we still obtain the fact that the pressure does not change radially. Having that in mind, we proceed to averaging over the cross-sectional area. Alternatively, define the average of any axisymmetric quantity $f$ by
\begin{equation}\label{eqn:Average}
	\overline{f}(x,t) := \frac{1}{\pi R^2}\int_0^{2\pi}\int_0^R f(x,t,r) r dr d\theta = \frac{2}{R^2} \int_0^R f(x,t,r) r dr.   
\end{equation}
Next, multiply the conservation of mass equation in \eqref{eqn:NSScaled} by $r$ and integrate to, similarly as above, obtain
\begin{equation}
	0 = \int_0^R u_x rdr + R v|_{r=R} = \left(\int_0^R u rdr\right)_x - R \left(R_x u - v\right)|_{r=R}. 
\end{equation}
From the definition of the average \eqref{eqn:Average} and the kinematic boundary condition \eqref{eqn:BoundaryKinametic} we arrive at the \textit{ averaged conservation of the mass equation}
\begin{equation}\label{eqn:ConservationOfMassAvg}
	(R^2)_t + \left(\overline{u} R^2\right)_x = 0.
\end{equation}

Before we move to the momentum equation, we can use the conservation of mass to remove the $v$-related term in \eqref{eqn:NSScaled}, since
\begin{equation}
	(r v u)_r = (rv)_r u + r v u_r = - r u u_x + r v u_r \implies r v u_r = (r v u)_r + r u u_x
\end{equation}
Hence, multiplying by $r$ and integrating the first equation in \eqref{eqn:NSScaled} we can obtain the following
\begin{equation}
	\epsilon^2 Re\left(\left(\int_0^R u r dr\right)_t + \left(\int_0^R u^2 r dr\right)_x - R u|_{r = R}\left(R_t + u R_x - v\right)|_{r=R}\right) = - \frac{1}{2}R^2 p_x + R u_r|_{r=R},
\end{equation}
where we used the fact that $(u^2))x = 2 u u_x$. Recalling the definition of the cross-sectional average \eqref{eqn:Average} and the kinematic boundary condition \eqref{eqn:BoundaryKinametic} we finally obtain
\begin{equation}\label{eqn:NSAveraged0}
	\frac{1}{2}\epsilon^2 Re \left(\left(R^2 \overline{u}\right)_t + \left(R^2 \overline{u^2}\right)_x\right) = - \frac{1}{2}R^2 p_x + R u_r|_{r=R}.
\end{equation}
As usual, averaging the Navier-Stokes equation, we obtain a new variable $ \overline{u^2}$ that has to be related to the dynamical ones: $\overline{u}$ and $R$. The typical Bussinesq assumption yields
\begin{equation}
	\overline{u^2} = \alpha \overline{u}^2,
\end{equation}
with the shape factor $\alpha$ that has to be specified empirically. For example, the classical Poiseuille flow yields $\alpha = 8/3$ and in the experimentally confirmed power-law model \cite{smith2002anatomically}
\begin{equation}\label{eqn:PowerLawVelocity}
	u(r,x,t) = \frac{2+\gamma}{\gamma}\overline{u}(x,t) \left(1-\left(\frac{r}{R}\right)^\gamma\right), \quad \gamma > 0,
\end{equation} 
we have $\alpha = 2(2+\gamma)/(1+\gamma)$. The prefactor has been chosen to fix the cross-sectional average exactly to $\overline{u}$. The velocity profile in lymphangions is typically described as parabolic \cite{rahbar2011model, contarino2018one}. However, anatomical considerations suggest that this assumption may not be universally valid \cite{margaris2012modelling}. Because of that, we can assume that, in general, the velocity has the form
\begin{equation}\label{eqn:VelocityProfile}
	u(r,x,t) = \overline{u}(x,t) \psi\left(\frac{r}{R}\right) \quad \text{where} \quad \int_0^1 \psi(z) z dz = \frac{1}{2}, \quad \psi(R) = 0, \quad \psi'(0) = 0,
\end{equation}
where the normalization condition on $\psi$ forces the cross-sectional average of $u$ to be equal to $\overline{u}$. Note that by the above, the boundary conditions \eqref{eqn:Boundary} are automatically satisfied. In this general case, the shape coefficient is equal to
\begin{equation}
	\alpha = 4\int_0^1 \psi(z)^2 z dz,
\end{equation}
and can be considered as a known value. Therefore, assuming the experimentally confirmed profile \eqref{eqn:VelocityProfile} in \eqref{eqn:NSAveraged0} we obtain the \textit{averaged momentum equation}
\begin{equation}
	\epsilon^2 Re \left(\left(R^2 \overline{u}\right)_t + \alpha\left(R^2 \overline{u}^2\right)_x\right) = - R^2 \left(\tau \phi'(R) R_x - \sigma R_x R_{xx} - \sigma R R_{xxx}\right) + 2\psi'(1) \overline{u},
\end{equation}
which, together with the conservation of mas \eqref{eqn:ConservationOfMassAvg}, form a closed system of two nonlinear PDEs for the mean velocity $\overline{u}=\overline{u}(x,t)$ and the radius of the vessel $R=R(x,t)$. We can now revert the scaling and put the governing equations in the dimensional form
\begin{equation}\label{eqn:MainSystemDimensional}
	\begin{cases}
		(R^2)_t + (U R^2)_x = 0, \\
		\left(U R^2\right)_t + \alpha\left(U^2 R^2\right)_x = -\dfrac{R^2}{\rho} \left(G\phi'\left(\dfrac{R}{R_0}\right) R_x- T R_x R_{xx} - T R R_{xxx}\right) + 2\nu \,\psi'(1) U, \\
	\end{cases}
\end{equation}
where, we denote the velocity averaged by dimensional area by $U=U(x,t)$ and retained the same notation for $R$ as in the nondimensional case. Note that the above can be neatly simplified when we consider the area $A=\pi R^2$ and the convective flux $Q= U A$ as dynamical variables, that is, we can also consider
\begin{equation}\label{eqn:MainSystemDimensional}
	\begin{cases}
		A_t + Q_x = 0, \\
		Q_t + \alpha\left(U Q\right)_x = -\dfrac{1}{\rho}A \; p_x + 2\nu \,\psi'(1) \dfrac{Q}{A},
	\end{cases}
\end{equation}
Notice that the left-hand sides of these are also present in the shallow-water equations (in the Saint-Venant model) This equation has previously been derived in the context of vascular flow by several authors. Usually by an ad hoc procedure \cite{reddy1995mathematical} and sometimes by, similar to ours, a rigorous asymptotic analysis of the Navier-Stokes equations \cite{smith2002anatomically}. Here, thanks to the scaling we have been able to justify when the above system is valid for the description of the blood and lymph flow. Note also that we can use only one model to compute the flow through the vessel regardless of whether it is an artery, vein, or lymphangion. To distinguish between them, we only have to specify the appropriate compliance function $\phi$, velocity profile $\psi$, and decide whether it is meaningful to take into account the vessel tension $T$ that can be meaningful for the lymph.

\subsection{Valve boundary conditions for the lymphangion}
The above derived dynamic equations are valid for all physiological flows considered: blood and lymph. As explained in the Introduction, the latter is much less understood and we devote to it the remainder of this paper. 

We can characterize the lymphangion's valve state as open or closed. To idealize, assume that for these two states we have $R = R_-$ for the closed valve and $R = R_+$ for the open valve. The triggering between these two states occurs when some quantity crosses through its threshold value. For example, in \cite{bertram2011chain} the authors considered the opening of the value if the pressure gradient was large enough. This suggests that the boundary condition can be of the form
\begin{equation}\label{eqn:BoundaryConditionP}
    R = R_- + (R_+ - R_-)f(\Delta p - P) \quad \text{at the boundary} \quad x = 0,1,
\end{equation}
where $f$ is the activation function, for example, discontinuous Heaviside or a smooth sigmoid. That is, we can model the opening and closing of the valve using a switching function triggered by an external mechanism \cite{contarino2018one}.

In what follows, we analyze a different mechanism for triggering the valve state change. The main premise for this mechanism is the biochemical oscillations of nitric oxide (NO) and calcium ions (described in the following in detail). The presence of nitric oxide relaxes the lympgangion's wall by pumping the fluid through an open valve. As the lymphangion is filled, the wall shear stress decreases, inhibiting NO production. Then, through several channels, calcium ions are produced, leading to a contraction of the lymphangion that results in the opening of the downstream valve. The flow increases the wall shear stress and, hence, NO starts to be produced again. The process continues in this oscillatory fashion. We will describe this period behavior in detail in the next section, but now we can claim that the boundary value of $R$ can also depend on the concentration of calcium ions $C$, that is,
\begin{equation}
    R = R_- + (R_+ - R_-)(1-f(C - C_0)) \quad \text{at the boundary} \quad x = 0,1, 
\end{equation}
where $C_0$ is the threshold value required to contract the lymphangion. The important point is that, as we noted, concentrations of nitric oxide and calcium ions are closely intertwined. The concentration of the former is driven by the wall sheer stress $\sigma$, which for the Poisseulie flow is given by (for other profiles a similar relation holds)
\begin{equation}\label{eqn:BoundaryConditionS}
    \sigma = - \frac{R\Delta p}{2L},
\end{equation}
where $L$ is the length of the lymphangion. We therefore see that in reality both \eqref{eqn:BoundaryConditionP} and \eqref{eqn:BoundaryConditionS} are in some sense equivalent when it comes to modeling. The common link is shear stress that connects the concentration of the chemical species with the radius of the lymphangion and the pressure gradient. In our analysis, we choose to describe the biochemical mechanisms because, in our opinion, it is much more fundamental. 

\section{Biochemical dynamics model}
Having derived two models that describe lymphangion fluid mechanics, we now proceed to the main result of our paper - analysis of the chemical kinetics required for the mechanism of valve opening and closing. Since it is not our aim here to analyze the coupled system: fluid + biochemistry, from now on we will assume that the radius of the lymphangion $R$ is constant, for example, we can take its mean value throughout its length. Even with this assumption, we will show that the biochemical model still exhibits periodic oscillations. It is an objective of our future work to analyze the completely coupled model and investigate how the changes in the radius affect parameters of the chemical oscillations. 

\subsection{Derivation}
Lymphatic pumping is regulated by biochemical processes. Among the key regulators of this process are calcium ions (Ca$^{2+}$) and nitric oxide, whose interaction governs the contractility and relaxation cycles of lymphatic vessels. Calcium ions (Ca$^{2+}$) are critical regulators of lymphatic vessel contractions. Similarly as in blood vessels, Ca$^{2+}$ influx initiates the contraction of lymphatic muscle cells.  

In our model, we consider the following aspects of dynamics of calcium ions: 
\begin{itemize}
    \item First order decay, enhanced by NO concentration, which can modulate the activity of the calcium clearance mechanism.
    \item Voltage-dependent calcium channels, that split to L-type (“longlasting”) and T-type (“transient") channels \cite{munn2015mechanobiology}.
    \item Ca$^{2+}$ influx from stretch-activated ion channels. The vessel responds to an increase in luminal pressure by constricting \cite{munn2015mechanobiology}, \cite{kunert2015mechanobiological}.
    \item Ca-dependent calcium channels, activated when Ca$^{2+}$ concentration exceeds the threshold level \cite{munn2015mechanobiology}. 
\end{itemize}
The descriptions, meaning and typical numerical values of all parameters used are presented in Tab. \ref{tab:Parameters}. The final equation is of the form:
\begin{equation}\label{eq:dCa_dt}
    \frac{dC}{dt} =
        -k_{Ca}^{-}(1+mN)C + k_{Ca}^{+} + k_{Ca}^{+} S\left(\frac{R}{R_{crit}}\right) + 10k_{Ca}^{+} \mathcal{H}(C - C_{tresh})
\end{equation}
\normalcolor
where $\mathcal{H}$ is the Heaviside function. 

NO (nitric oxide) is a gas synthesized from L-arginine by NO synthase (NOS) in vascular endothelial cells \cite{ohhashi2023physiological}. We distinguish three types of NOS \cite{govers2001cellular}:
 \begin{itemize}
     \item eNOS,
     \item neuronal NOS (nNOS),
     \item cytokine-inducible NOS (iNOS).
 \end{itemize}
The first two types can be activated rapidly by an increase in Ca$^{2+}$, leading first to activation by phosphorylation of NOS and finally to the subsequent release of NO. On the other hand, increased lymph flow generates shear stress, which stimulates NO production through NOS activation \cite{munn2015mechanobiology}. 
We describe the NO dynamics with:
\begin{itemize}
    \item exponential decay \cite{munn2015mechanobiology},
    \item production proportional to shear stress $\tau_{xx}$ \cite{munn2015mechanobiology}.
\end{itemize}
We relate the activation of the shear stress-based mechanism with the level $Ca^{2+}$
\begin{equation}\label{eq:dNO_dt}
    \frac{dN}{dt} = -k_{NO}^{-}N + \mathcal{H}(C - C_{shear}) k_{NO}^{+} \left( \frac{\tau_{xx}}{\tau_{ref}}\right).
\end{equation}
To facilitate further analysis, we express \eqref{eq:dCa_dt} and \eqref{eq:dNO_dt} in nondimensional form. A natural choice for the concentration of Ca$^{2+}$ is, of course, $C_{shear}$ as at this value the discontinuity of the flow appears. Moreover, from \eqref{eq:dNO_dt} we see that the equation would be simplified provided that we scale $N$ with $m^{-1}$. Finally, for the time scale we choose $k_{NO}^{-}$, that is, the scales are the following
\begin{equation}
    C \sim C_{shear}, \quad N \sim \frac{1}{m}, \quad t \sim \frac{1}{k_{NO}^{-}}.
\end{equation}
As usual, to avoid cluttering the notation, we retain the original names for the dependent and independent variables. From now on, we will work only in the nondimentional form. Note that, after the scaling, the jump in the first equation becomes simply $\mathcal{H}(C-1)$. By elementary computations, we obtain the scaled system
\begin{equation}\label{eq:alternative_scaling_system}
    \begin{cases}
        \displaystyle{C' = -\alpha (1+N) C + \beta + \gamma \mathcal{H}(C-1), }\\
        \displaystyle{N' = -N + \zeta \mathcal{H}\left(C-\frac{C_{tresh}}{C_{shear}}\right),}
    \end{cases}
\end{equation}
\normalcolor
where the nondimensional parameters are defined by
\begin{equation}
    \alpha := \frac{k_{Ca}^-}{k_{NO}^{-}}, \quad \beta=\frac{k_{Ca}^+\left( 1 + S\left(\frac{R}{R_{crit}}\right)\right)}{k_{NO}^{-} C_{shear}}, \quad
    \gamma=\frac{10 k_{Ca}^+}{k_{NO}^{-} C_{shear}}, \quad
    \zeta = \frac{k_{NO}^+ m \left(\frac{\tau_{xx}}{\tau_{ref}}\right)}{ k_{NO}^{-}},
\end{equation}
where $\tau_{xx}=\frac{\Delta p R}{2 l}$.
The values of the parameters are summarized in Table \ref{tab:Parameters}. The typical values of nondimensional parameters are presented in Table \ref{tab:NondimensionalParameters} and, as we can see, all are of order of unity. As $S\left( \frac{R}{R_{crit}}\right)$ we take $\left( \frac{R}{R_{crit}}\right)^{11}$ as suggested in \cite{kunert2015mechanobiological}, while as $\tau_{xx}$ we use the typical value of 1 Pa \cite{angeli2023biomechanical}.
\begin{table}
\centering
\begin{tabular}{cccc}
  \toprule
  symbol & description & value & reference \\
  \midrule
  $k_{NO}^{-}$ & NO degradation rate constant & $75.1 s^{-1}$ & \cite{li2024fluid} \\ 
  $k_{NO}^{+}$ & NO production rate constant  & 20 & \cite{li2024fluid} \\ 
  $k_{Ca}^{-}$ & $Ca^{2+}$ degradation rate constant & $375.9 s^{-1}$ & \cite{li2024fluid} \\
  $k_{Ca}^{+}$ & $Ca^{2+}$ production rate constant & $2.5 s^{-1}$ & estimated \\
  $m$          & Rate constant for NO inhibition of $Ca^{2+}$ & 0.5 & $ \cite{li2024fluid}$ \\
  $R$ & Vessel radius & $R_0$& \\
  $R_{crit}$   & Value fo $R$ activating stretch-ativated channels & $0.77 R_0$ & estimated \\
  $\tau_{ref}$ & Shear stress activating NO production & 0.1 Pa & \cite{angeli2023biomechanical} \\ 
  $C_{shear}$  & $Ca^{2+}$ concentration activating SS-based mechanism & 0.1 &  \\
  $C_{tresh}$  & $Ca^{2+}$ concentration activating calcium channels & 0.1 & \cite{li2024fluid} \\
  \bottomrule
\end{tabular}
\caption{Physical quantities of the model.}
\label{tab:Parameters}
\end{table}

\begin{table}
\centering
\begin{tabular}{cc}
  \toprule
  parameter & value \\
  \midrule
  $\alpha$ & 5.01 \\
  $\beta$ & 6.23 \\
  $\gamma$ & 3.33 \\
  $\zeta$ & 1.07 \\
  \bottomrule
\end{tabular}
\caption{Nondimensional parameters of the model.}

\label{tab:NondimensionalParameters}
\end{table}

\subsection{Analysis of the nonsmooth system}
In this section, we analyze the simplified system \eqref{eq:alternative_scaling_system}, with $R=const.$ being the typical value of radius and $C_{tresh}=C_{shear}$. We show that despite its reduced complexity, the system exhibits nontrivial dynamical behavior. In particular, for an appropriate choice of parameters, the system admits a periodic limit cycle.

Let 
\begin{equation}
    \mathbb{R}_{+}^{2}= \{x=(N, C)^T | N \geq 0,  C \geq 0\},
\end{equation}
and
\begin{equation}
    H(x, \mu) := C - 1,
\end{equation}
be the smooth scalar function with non-zero gradient. With $\mu \in \mathbb{R}$ we indicate the dependence on the parameter. We omit this notation where necessary. Then we can define 
\begin{equation}\label{eq:sigmas}
    \Sigma^{-} = \{x \in \mathbb{R}_{+}^{2} \quad | \quad H(x, \mu) < 0 \}, \quad \Sigma^{+} = \{x \in \mathbb{R}_{+}^{2} \quad | \quad H(x, \mu) > 0 \},
\end{equation}
which are two smooth vector fields separated by the boundary:
\begin{equation}
    \Sigma = \{x \in \mathbb{R}_{+}^{2} \quad | \quad H(x) = 0 \}.
\end{equation}
The system \eqref{eq:alternative_scaling_system} can thus be rewritten as:
\begin{equation}
    Z'(t) = \begin{cases}
        F_1(x, \mu), & x \in \Sigma^{-}\\
        F_2(x, \mu), & x \in \Sigma^{+}
    \end{cases},
\end{equation}
where we define
\begin{equation}
    F_1(x, \mu) := (-N, \, -\alpha (N+1)C + \beta)^T, \quad F_2(x, \mu) := (-N + \zeta, \, -\alpha (N+1)C + \beta + \gamma)^T.
\end{equation}
To classify the boundary, we need to check the sign of the Lie derivative. We denote the Lie derivative as $F.H(x) = \langle \nabla F(x), H(x) \rangle$ and the i-th Lie derivative as $F^i.H(x) = \langle \nabla F^{i-1}(x), H(x) \rangle$. We distinguish the following regions on the discontinuity set $\Sigma$:
\begin{enumerate}
    \item $\Sigma_C \subseteq \Sigma$ is the \emph{crossing region} if $(F_2.H(x))(F_1.H(x)) > 0$ in $\Sigma_C$
    \item $\Sigma_s \subseteq \Sigma$ is the \emph{attracting sliding region} if $(F_2.H(x)) < 0$ and $(F_1.H(x)) > 0$ on $\Sigma_s$
    \item $\Sigma_e \subseteq \Sigma$ is the \emph{repelling sliding region} if $(F_2.H(x)) > 0$ and $(F_1.H(x)) < 0$ on $\Sigma_e$
\end{enumerate}
The point $x \in \Sigma$, such that $(F_2.H(x))(F_1.H(x)) = 0$ is called a \textbf{tangent point} or \textbf{tangential singularity} \cite{buzzi2018poincare} \cite{bernardo2008piecewise}. It is a tangent contact point between the trajectories of $F_1$ and/or $F_2$ with $\Sigma$.
In the case of our system, after simple calculations, we obtain:
\begin{equation}
    F_1.H(x) = -\alpha(N+1)C + \beta, \quad  F_2.H(x) = -\alpha(N+1)C + \beta + \gamma.
\end{equation}
As the product of $F_2.H(x)$ and $F_1.H(x)$ is rather complex, we provide Figure \ref{img:lie_derivatives} and Figure \ref{fig:three_boundary_cases} as a visual aid. The solid and dashed lines represent the points where $F_2.H(x)$ and $F_1.H(x)$, respectively, are equal to zero. The formulas are:
\begin{itemize}
    \item for $F_2.H(x)$:
    \begin{equation}
        C(N) = \frac{\beta}{\alpha(N+1)}
    \end{equation}
    \item for $F_1.H(x)$:
    \begin{equation}
        C(N) = \frac{\beta + \gamma}{\alpha(N+1)}
    \end{equation}
\end{itemize}

\begin{figure}[h!]
    \centering
    \begin{tikzpicture}
    \begin{axis}[
        axis lines=left,
        xlabel={$N$},
        ylabel={$C(N)$},
        domain=0:10,
        samples=200,
        ymin=0,
        ymax=5.5,
        xtick=\empty,
        ytick={1.5,4.5},
        yticklabels={$ \frac{\beta}{\alpha} $, $ \frac{\beta+\gamma}{\alpha} $},
        width=0.8\textwidth,
        height=0.5\textwidth,
        every axis plot/.append style={black},
        axis line style={black},
        tick style={black},
        every axis label/.append style={black},
        every tick label/.append style={black},
        every axis y label/.style={at={(axis description cs:-0.08,0.5)},anchor=south,rotate=90},
        every axis x label/.style={at={(axis description cs:0.5,-0.08)},anchor=north},
        axis on top,
        clip=false
    ]

    \addplot[thick,solid,domain=0:10] { (4.5)/(x+1) };

    \addplot[thick,dashed,domain=0:10] { (1.5)/(x+1) };

    \addplot[densely dotted] coordinates {(0,2.5) (10,2.5)};
    \addplot[densely dotted] coordinates {(0,1.5) (10,1.5)};


    \node[align=center, font=\tiny, text=black] at (axis cs:2,2.9)
    {$F_1.H(X)<0$};
    \node[align=center, font=\tiny, text=black] at (axis cs:2,2.6)
    {$F_2.H(X)<0$};
    
    \node[align=center, font=\tiny, text=black] at (axis cs:0.7,1.6)
    {$F_1.H(X)<0$};
    \node[align=center, font=\tiny, text=black] at (axis cs:1.1,1.3)
    {$F_2.H(X)>0$};
    
    \node[align=center, font=\tiny, text=black] at (axis cs:1.6,0.2)
    {$F_1.H(X)>0,\; F_2.H(X)>0$};

    \end{axis}
    \end{tikzpicture}
    \caption{Example realization of $F_{\Sigma^+}.H(x)$ (solid line) and $F_{\Sigma^-}.H(x)$ (dashed line).}
    \label{img:lie_derivatives}
\end{figure}
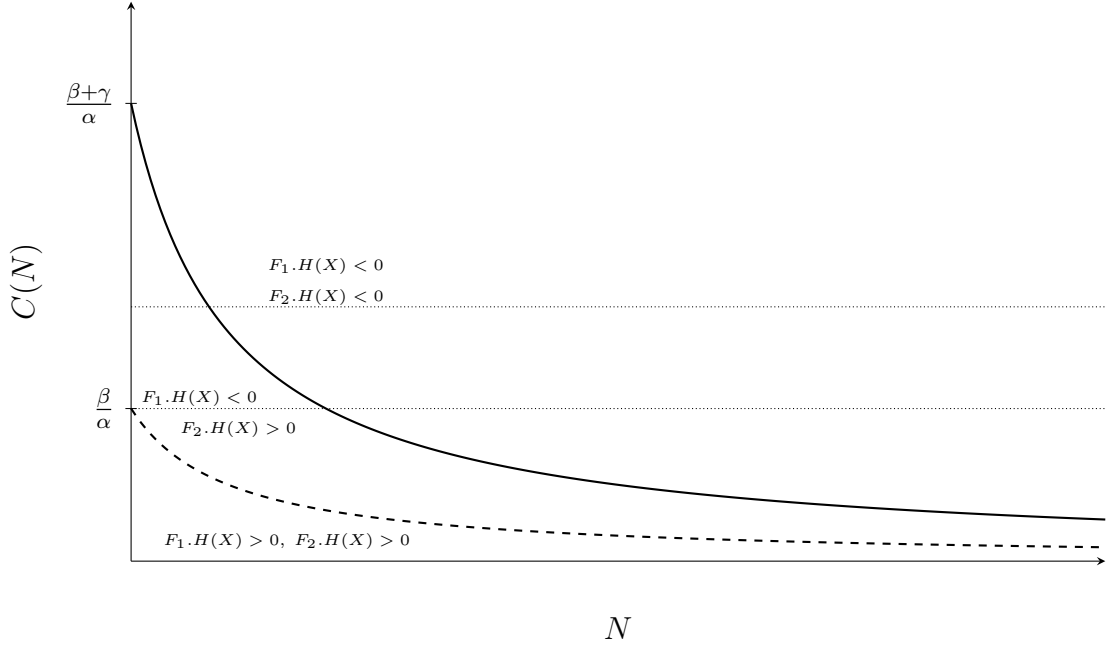

Depending on the values of parameters $\alpha, \beta, \gamma$, we may encounter one of the following cases:
\begin{itemize}
    \item $\frac{\beta}{\alpha} < \frac{\beta + \gamma}{\alpha} \leq 1$ - the entire boundary is classified as crossing region. If $\frac{\beta + \gamma}{\alpha}=1$, at $(0, 1)$ there exists a tangential singularity and the rest of the boundary is classified as the crossing region. 
    \item $\frac{\beta}{\alpha} \leq 1 < \frac{\beta + \gamma}{\alpha}$ - boundary consists of repelling sliding region, tangential singularity and crossing region. If $\frac{\beta}{\alpha} =1$, the boundary conists of tangential singularity, escaping sliding region, another tangential singularity and crossing region. 
    \item $1 < \frac{\beta}{\alpha} < \frac{\beta + \gamma}{\alpha}$ - boundary consists of crossing region, tangential singularity, repelling sliding region, tangential singularity and crossing region.
\end{itemize}

\begin{figure}[h!]\label{fig:three_boundary_cases}
    \centering
    \begin{tikzpicture}

    \pgfplotsset{
    commonaxis/.style={
        axis lines=left,
        xlabel={$N$},
        ylabel={$C(N)$},
        domain=0:10,
        samples=200,
        ymin=0,
        ymax=1.6,
        xtick=\empty,
        xticklabels={},
        xtick style={draw=none},
        yticklabel style={anchor=east, xshift=-2pt},
        width=0.3\textwidth,
        height=0.35\textwidth,
        every axis plot/.append style={black},
        axis line style={black},
        tick style={black},
        every axis label/.append style={black},
        every tick label/.append style={black},
        every axis title/.append style={black},
        every axis y label/.style={
            at={(axis description cs:-0.25,0.5)},
            anchor=south,
            rotate=90
        },
        every axis x label/.style={
            at={(axis description cs:0.5,-0.08)},
            anchor=north
        },
        axis on top,
        clip=false
    }
}
    \begin{scope}
        \begin{axis}[
            commonaxis,
            title={\textcolor{black}{$\frac{\beta}{\alpha} < \frac{\beta + \gamma}{\alpha} \leq 1$}},
            ytick={0.5,0.8},
            yticklabels={$ \tfrac{\beta}{\alpha} $, $ \tfrac{\beta+\gamma}{\alpha} $}]
            \addplot[thick,solid] {0.8/(x+1)};
            \addplot[thick,dashed] {0.5/(x+1)};
            \addplot[ultra thick,dash dot] coordinates {(0,1) (10,1)};
            \node[anchor=south, yshift=2pt, text=black] at (axis cs:10,1) {$C=1$};
        \end{axis}
    \end{scope}

    \begin{scope}[xshift=6cm]
        \begin{axis}[
            commonaxis,
            title={\textcolor{black}{$\frac{\beta}{\alpha} \leq 1 <\frac{\beta + \gamma}{\alpha}$}},
            ytick={0.8,1.2},
            yticklabels={$ \tfrac{\beta}{\alpha} $, $ \tfrac{\beta+\gamma}{\alpha} $}
        ]
            \addplot[thick,solid] {1.2/(x+1)};
            \addplot[thick,dashed] {0.8/(x+1)};
            \addplot[ultra thick,dash dot] coordinates {(0,1) (10,1)};
            \node[anchor=south, yshift=2pt, text=black] at (axis cs:10,1) {$C=1$};
        \end{axis}
    \end{scope}

    \begin{scope}[xshift=12cm]
        \begin{axis}[
            commonaxis,
            title={\textcolor{black}{$1 < \frac{\beta}{\alpha} < \frac{\beta + \gamma}{\alpha}$}},
            ytick={1.2,1.4},
            yticklabels={$ \tfrac{\beta}{\alpha} $, $ \tfrac{\beta+\gamma}{\alpha} $}
        ]
            \addplot[thick,solid] {1.4/(x+1)};
            \addplot[thick,dashed] {1.2/(x+1)};
            \addplot[ultra thick,dash dot] coordinates {(0,1) (10,1)};
            \node[anchor=south, yshift=2pt, text=black] at (axis cs:10,1) {$C=1$};
        \end{axis}
    \end{scope}

    \end{tikzpicture}
    \caption{Three main cases of boundary classification. }
\end{figure}
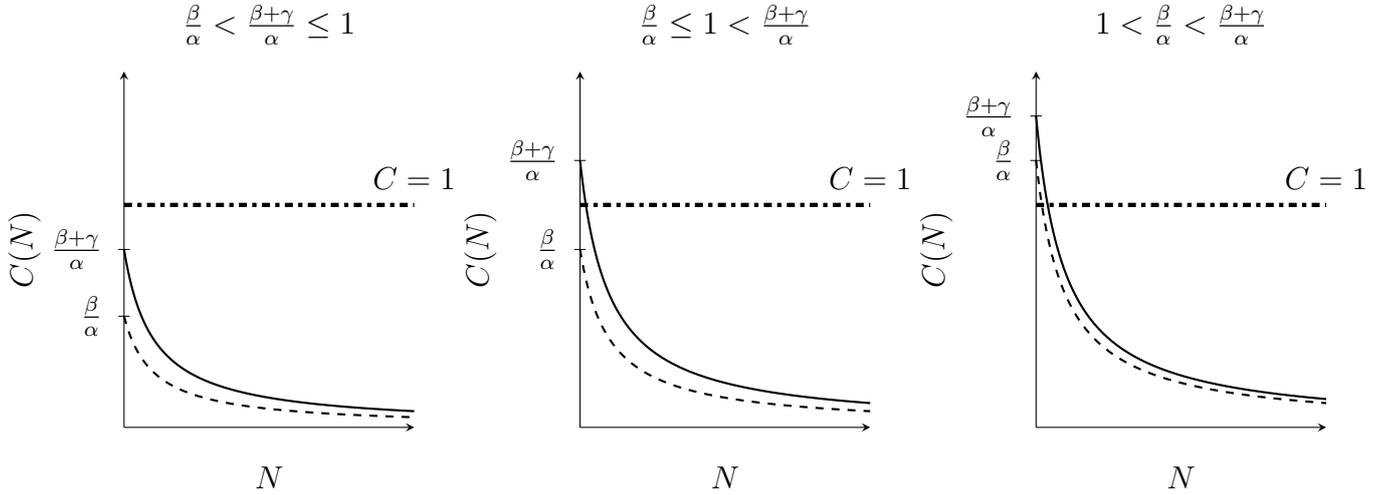
The phase planes corresponding to three cases of boundary classification described above are depicted Figure \ref{fig:boundary_phase_plane}.
\begin{figure}
    \centering
    \includegraphics[width=\linewidth]{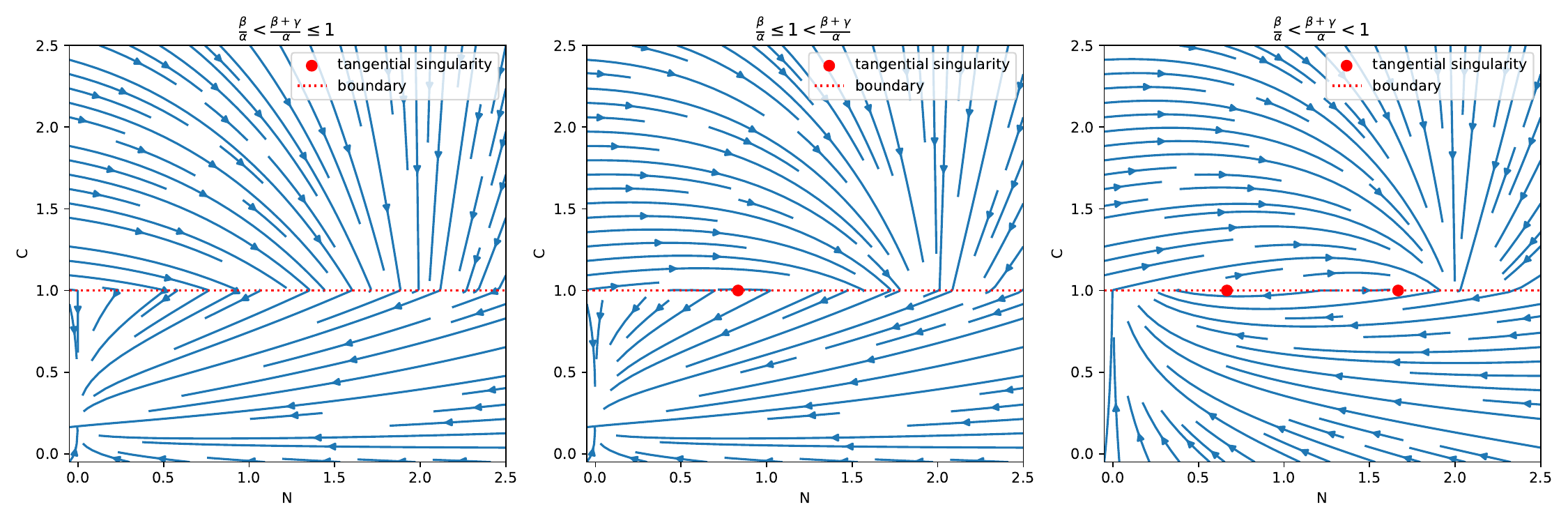}
    \caption{Three main cases of boundary classification - phase planes.}
    \label{fig:boundary_phase_plane}
\end{figure}
In our system, we have two tangent points: $S_1 = \left(\frac{\beta}{\alpha} -1, 1 \right)$ and $S_2 = \left(\frac{\beta + \gamma}{\alpha} -1, 1 \right)$. To classify them, we need to check the signs of the higher Lie derivatives. The second Lie derivative for $F_1$ is:
\begin{equation}\label{eqn:second_Lie_minus}
    F_1^{2}.H(x) = \alpha^2N^2C + 2\alpha^2N C + \alpha N C + \alpha^2 C - \alpha B N - \alpha \beta,
\end{equation}
while for $F_2$:
\begin{equation}\label{eqn:second_Lie_plus}
    F_2^{2}.H(x) = \alpha^2 N^2 C + 2 \alpha^2 N C + \alpha N C + \alpha^2 C - \alpha \zeta C - \alpha \beta N - \alpha \gamma N - \alpha \beta - \alpha \gamma.
\end{equation}
The value of $F_1^{2}.H(x)$ at the point $S_1$ is $\beta - \alpha$ and for the point $S_2$ the value is $\beta^2 + \beta \gamma + \gamma + \beta - \alpha$. The value of $F_2^{2}.H(x)$ at the point $S_1$ is $\beta - \alpha - \alpha \zeta - \beta \gamma$ and for the point $S_2$ the value is $\beta + \gamma - \alpha(\zeta + 1)$.
\begin{definition}\cite{bernardo2008piecewise}
    We say that the point $x$ is \textbf{admissible} equilibrium if:
    \begin{equation}
        F_1(x, \mu) = 0, \quad H(x, \mu) < 0.
    \end{equation}
or 
    \begin{equation}
        F_2(x, \mu) = 0, \quad H(x, \mu) > 0.
    \end{equation}
We say that the point $x$ is \textbf{virtual} equilibrium if:
    \begin{equation}
        F_1(x, \mu) = 0, \quad H(x, \mu) > 0.
    \end{equation}
or 
    \begin{equation}
        F_2(x, \mu) = 0, \quad H(x, \mu) < 0.
    \end{equation}
\end{definition}
\begin{definition}\cite{bernardo2008piecewise}
    We call a point $\tilde{x}$ a \textbf{pseudo-equilibrium} if it is an equilibrium of the sliding flow, i.e. for some scalar $\alpha_P$,
    \begin{equation}
        F_1(\tilde{x}, \mu) + \alpha_P (F_2 - F_1) = 0, \quad H(\tilde{x}, \mu) = 0.
    \end{equation}
    We call a pseudo-equilibrium \textbf{admissible} if $0 < \alpha_P < 1.$
Alternatively, we say that a pseudo-equilibrium is \textbf{virtual} if
$\alpha_P < 0$ or $\alpha_P > 1$.
\end{definition}
\begin{definition}\cite{bernardo2008piecewise}A point $\hat{x}$ is termed a \textbf{boundary equilibrium} of if
\begin{equation}
    F_1(\hat{x}, \mu)=0 \text{ or } F_2(\hat{x}, \mu)=0, \quad H(\hat{x}, \mu)=0.
\end{equation}   
\end{definition}
\noindent For $F_1$ equilibrium is given with the formula:
\begin{equation}\label{eqn:critical_point_minus}
    P_{st}^{-} = \left(0, \frac{\beta}{\alpha}\right).
\end{equation}
Note that this is the:
\begin{itemize}
    \item admissible equilibrium if $C<1$ that is, $\beta < \alpha$;
    \item boundary equilibrium if $C=1$, that is, $\beta=\alpha$;
    \item virtual equilibrium if $C>1$, that is $\beta > \alpha.$
\end{itemize}
The formula for Jacobi matrix is:
\begin{equation}
    F_{1,x} = \begin{pmatrix}
        -1 & 0 \\
        -\alpha C & -\alpha N - \alpha
        \end{pmatrix}.
\end{equation}
At point $P_{st}^{-}$ we have:
\begin{equation}
    F_{1, x} - \lambda I = \begin{pmatrix}
        -1-\lambda & 0 \\
        -\beta & -\alpha - \lambda
        \end{pmatrix}.
\end{equation}
and eigenvalues are $\lambda_{1, 1} = -1$, $\lambda_{2, 1} = -\alpha < 0.$ For $\alpha \neq 1$ point $P_{st}^{-}$ is a \textbf{stable node}. For $\alpha=1$ we have a \textbf{stable degenerate node}.
For $F_2$ the equilibrium is given with the formula:
\begin{equation}\label{eqn:critical_point_plus}
    P_{st}^{+} = \left(\zeta, \frac{\beta + \gamma}{\alpha (\zeta + 1)} \right).
\end{equation}
This is the:
\begin{itemize}
    \item admissible equilibrium if $C>1$ that is, $\beta > \alpha(\zeta + 1) - \gamma.$;
    \item boundary equilibrium if $C=1$, that is, $\beta = \alpha(\zeta + 1) - \gamma$;
    \item virtual equilibrium if $C<1$, that is $\beta < \alpha(\zeta + 1) - \gamma.$
\end{itemize}
The formula for Jacobi matrix is:
\begin{equation}
    F_{2,x} = \begin{pmatrix}
        -1 & 0 \\
        -\alpha C & -\alpha N - \alpha
        \end{pmatrix}.
\end{equation}
At point $P_{st}^{+}$ we have:
\begin{equation}
     F_{2, x} - \lambda I = \begin{pmatrix}
        -1-\lambda & 0 \\
        -\frac{\beta + \gamma}{\zeta + 1} & -\alpha (\zeta + 1)- \lambda
        \end{pmatrix}.
\end{equation}
and eigenvalues are $\lambda_{1, 2} = -1$, $\lambda_{2, 2} = -\alpha (\zeta + 1) < 0.$ For $\alpha \neq \zeta + 1$ point $P_{st}^{+}$ is a \textbf{stable node}. For $\alpha = \zeta + 1$ point $P_{st}^{+}$ is a \textbf{stable degenerate node}.
\newline The pseudo-equilibrium of the system is:
\begin{equation}
    P_{ps} = \left(\zeta \frac{\alpha  - \beta}{\gamma - \alpha \zeta}, 1 \right)
\end{equation}
with 
\begin{equation}
    \alpha_P = \frac{\alpha - \beta}{\gamma - \alpha \zeta}.
\end{equation}
The pseudo-equilibrium is admissible if $0 < \alpha_P < 1$.

\subsection{Existence of limit cycles}
In the case where both equilibria are virtual and stable, we may encounter stable oscillations in the system \cite{simpson2025nonsmooth}. Orbits move towards one virtual equilibrium until crossing the boundary, then start moving towards the other equilibrium until crossing the boundary. This process repeats, leading to a stable limit cycle. Similar behavior was observed in climate and glacier models \cite{morupisi2021analysis}, \cite{walsh2016periodic}, \cite{walsh2020discontinuous}. For our system, the conditions are then
\begin{equation}\label{eqn:ConditionsOsc}
    \frac{\beta}{\alpha} > 1, \quad \frac{\beta + \gamma}{\alpha (\zeta + 1)} < 1.
\end{equation}
In such a case, the boundary consists of the following regions:
\begin{itemize}
    \item crossing region for $N \in \left[0, \frac{\beta}{\alpha} - 1\right)$,
    \item repelling sliding region for $N \in \left(\frac{\beta}{\alpha} - 1, \frac{\beta + \gamma}{\alpha} -1\right)$
    \item crossing region for $N \in \left(\frac{\beta + \gamma}{\alpha} - 1, \infty\right)$.
\end{itemize}
The two tangential singularities are:
\begin{itemize}
    \item $S_1 = \left(\frac{\beta}{\alpha} - 1, 1 \right)$: from \eqref{eqn:second_Lie_minus} we find that the value of $F_1^{2}.H(x)$ in point $S_1$ is $\beta - \alpha > 0$.  Similarly, from \eqref{eqn:second_Lie_plus} we see that $F_2^{2}.H(x)$ at the point $S_1$ is $\beta - \alpha - \alpha \zeta - \beta \gamma < 0$. The tangential singularity is therefore invisible for both $F_1$ and $F_2$.
    \item $S_2 = \left(\frac{\beta + \gamma}{\alpha} - 1, 1 \right)$: from \eqref{eqn:second_Lie_minus} we find that the value of $F_1^{2}.H(x)$ at the point $S_2$ is $\beta^2 + \beta \gamma + \gamma + (\beta - \alpha) > 0.$ From \eqref{eqn:second_Lie_plus} we find that $F_2^{2}.H(x)$ at the point $S_2$ is $\beta + \gamma - \alpha (\zeta + 1) < 0$. The tangential singularity is therefore invisible for both $F_1$ and $F_2$.
\end{itemize}

\begin{figure}
    \centering
    \includegraphics[width=1.0\textwidth]{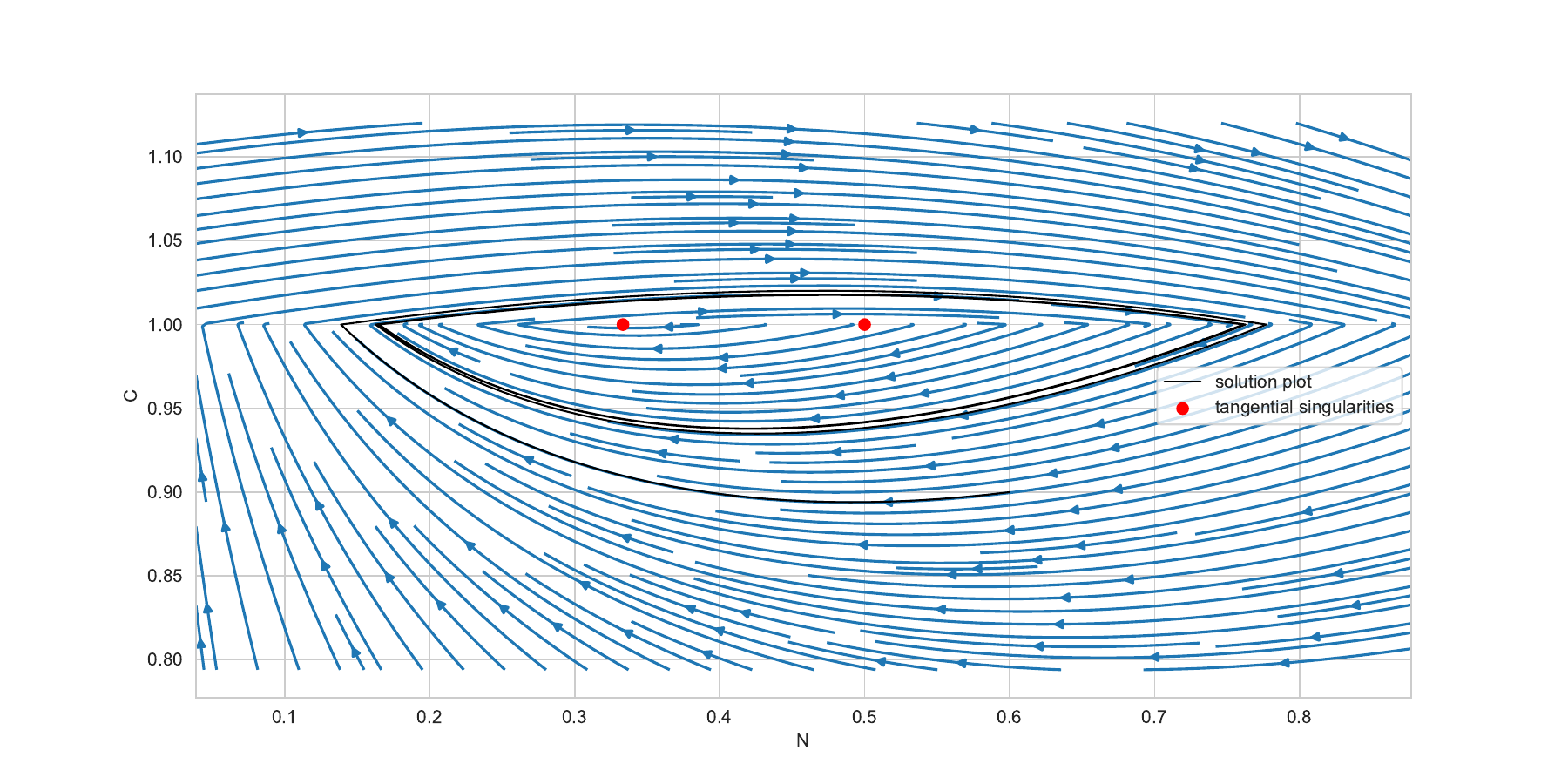}
    \caption{Exemplary phase plane of \eqref{eq:alternative_scaling_system} and its limit cycle.}
    \label{fig:example_phase_plane}
\end{figure}

The exemplary phase plane for the parameters that guaranty the oscillatory behavior of the system is presented in Figure \ref{fig:example_phase_plane}. It is possible to obtain closed form expressions for the flow of our nonsmooth vector field. For $F_1$ we have to solve:
\begin{equation}\label{eqn:cycle_lower_part}
 \frac{dC}{dN} = \frac{-\alpha(N+1)C + \beta}{-N}.
\end{equation}
With an integrating factor of $N^{-\alpha} e^{-\alpha N}$ we obtain
\begin{equation}
 \left( N^{-\alpha} e^{-\alpha N} C\right)' = \beta N^{-\alpha - 1} e^{\alpha N},
\end{equation}
which, after integration, gives:
\begin{equation}\label{eqn:lower_part_general_solution}
    C(N) = \beta (\alpha N)^{\alpha} e^{\alpha N} \Gamma(-\alpha, \alpha N) + K N^{\alpha} e^{\alpha N},
\end{equation}
where $\Gamma(a, x) = \int_{x}^{\infty} t^{a-1} e^{-t} dt$ is the incomplete gamma function and $K$ is the integration constant. With the initial condition $C(N_0) = 1$, we get the specific solution:
\begin{equation}
    C^-(N) = \left( \frac{N}{N_0}\right)^{\alpha} e^{\alpha(N - N_0)} - \beta N^{\alpha} e^{\alpha N} \int_{N_0}^{N} s^{-\alpha - 1} e^{-\alpha s} ds.
\end{equation}
This form can be simplified by changing the integration variable $s\mapsto s/N_0$ to obtain
\begin{equation}\label{eqn:FlowMinus}
    C^-(N) = \left( \frac{N}{N_0}\right)^{\alpha} e^{\alpha(N - N_0)}\left(1 - \beta e^{\alpha N_0} \int_{1}^{\frac{N}{N_0}} s^{-\alpha - 1} e^{-\alpha N_0 s} ds\right).
\end{equation}
Analogously, for $F_2$ we have
\begin{equation}\label{eqn:cycle_upper_part}
    \frac{dC}{dN} = \frac{-\alpha(N+1)C + \beta + \gamma}{-N+\zeta}.
\end{equation}
The solution, given the initial condition $C(N_0) = 1$ can be expressed with the formula:
\begin{equation}\label{eqn:FlowPlus}
    C^+(N) = e^{\alpha(N - N_0)} \left( \frac{\zeta - N}{\zeta - N_0}\right)^{\alpha(\zeta + 1)}\left(1 + (\beta + \gamma) \int_{N_0}^{N} \left(\frac{\zeta - s}{\zeta-N_0}\right)^{-\alpha(\zeta + 1) - 1} e^{-\alpha (s-N_0)} ds\right).
\end{equation}
Based on these two closed forms of the vector flow, we can establish the existence of a limit cycle. 
\begin{theorem}
Assume \eqref{eqn:ConditionsOsc}. Then, the system \eqref{eq:alternative_scaling_system} exhibits a periodic limit cycle. 
\end{theorem}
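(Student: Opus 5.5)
The plan is to build a Poincaré (first-return) map on the switching line $\Sigma=\{C=1\}$ and to find a fixed point of it.

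\textbf{Geometry of the two half-flows.} Under \eqref{eqn:ConditionsOsc} the equilibrium $P_{st}^-=(0,\beta/\alpha)$ of $F_1$ lies above $\Sigma$. The equilibrium $P_{st}^+=(\zeta,(\beta+\gamma)/(\alpha(\zeta+1)))$ of $F_2$ lies below $\Sigma$. Both are stable nodes, so neither vector field has an equilibrium in its own region.

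In $\Sigma^-$ we have $N'=-N$, so $N$ decreases monotonically to $0$ while $C$ is attracted towards $\beta/\alpha>1$. Hence every orbit starting on $\Sigma$ with $N_0>(\beta+\gamma)/\alpha-1$ and entering $\Sigma^-$ must return to $\Sigma$ in finite time. The return point has $N_1\in(0,\beta/\alpha-1)$, the lower crossing region, where $F_1.H>0$.

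In $\Sigma^+$ we have $N'=\zeta-N$, so $N$ increases towards $\zeta$ and $C$ is pushed towards a value below $1$. From the conditions, $\zeta>(\beta+\gamma)/\alpha-1$, because $(\beta+\gamma)/\alpha<\zeta+1$. So the orbit starting at $(N_1,1)$ enters $\Sigma^+$ and returns to $\Sigma$ at some $N_2\in((\beta+\gamma)/\alpha-1,\zeta)$, where $F_2.H<0$.

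The repelling sliding segment between $S_1$ and $S_2$ is never reached by these orbits. They land only in crossing regions, and the invisible tangencies at $S_1,S_2$ established above guarantee transversality near the endpoints.

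\textbf{The return maps.} Let $I^+=((\beta+\gamma)/\alpha-1,\zeta)$ and $I^-=(0,\beta/\alpha-1)$. Define $\Pi^-:I^+\to I^-$ as the first $N>0$ below $N_0$ with $C^-(N)=1$, using the explicit flow \eqref{eqn:FlowMinus}. Define $\Pi^+:I^-\to I^+$ as the first $N>N_0$ with $C^+(N)=1$, using \eqref{eqn:FlowPlus}. Set $\Pi=\Pi^-\circ\Pi^+:I^-\to I^-$.

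Both maps are well-defined and continuous. Continuity follows from transversal intersection and the implicit function theorem applied to $C^\pm(N;N_0)-1=0$. Both maps are monotone decreasing, because planar orbits of a single smooth field cannot cross. So $\Pi$ is continuous and increasing.

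\textbf{Existence of a fixed point.} The key step is to show that $\Pi$ maps a compact interval $[a,b]\subset I^-$ into itself. For $b$, bound $\Pi$ above: since $N\le N_0e^{-t}$ in $\Sigma^-$, the return point satisfies $N_1\le\beta/\alpha-1$ strictly, uniformly away from it. For $a$, show $\Pi(N_0)$ stays bounded away from $0$: the $\Sigma^-$ orbit starts at $N_0\ge(\beta+\gamma)/\alpha-1$, and the $C$-equation forces $C$ to drop below $1$ within a bounded time, because $C\to\beta/\alpha$ while $N$ decays only exponentially.

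Brouwer's theorem in one dimension (the intermediate value theorem) then gives a fixed point $N^*$, which corresponds to a closed orbit crossing $\Sigma$ twice. The boundary of the box gives an invariant annulus-type trapping region. Equivalently, one could invoke the Poincaré–Bendixson theorem for Filippov systems (cf.\ \cite{buzzi2018poincare}) on the region between two closed curves: an outer one built from the orbits through the endpoints of $I^\pm$, and an inner one around the sliding segment. This region contains no equilibria and no admissible pseudo-equilibria.

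I expect the main obstacle to be the lower bound away from $N=0$, and excluding an admissible pseudo-equilibrium. Near $N=0$ the explicit formula \eqref{eqn:FlowMinus} involves incomplete gamma behaviour, so I would estimate the integral directly to show that $C^-$ returns to $1$ before $N$ becomes small. To call the cycle a limit cycle, I would further show that $\Pi$ has isolated fixed points. One route is to check $\Pi'(N^*)<1$ via the standard saltation/divergence formula: both fields have divergence $-1-\alpha(N+1)<0$, giving contraction along smooth arcs. The other route is to use monotonicity of $\Pi$ together with analyticity of the explicit flows.
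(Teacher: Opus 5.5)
Your proposal is correct and is essentially the paper's argument: a Poincar\'e map on $\Sigma=\{C=1\}$ built from the two explicit half-flows, shown to send a compact interval into itself, with a fixed point from the intermediate value theorem (the paper composes the half-maps in the other order and works on $[(\beta+\gamma)/\alpha-1,\zeta]$ on the downward-crossing side). Some of your side remarks need correcting: \emph{(i)} the monotonicity you already proved gives the trapping interval directly, $\Pi(I^-)\subset[\Pi^-(\zeta),\Pi^-((\beta+\gamma)/\alpha-1)]\subset I^-$, so there is no obstacle near $N=0$; \emph{(ii)} the pseudo-equilibrium cannot be excluded, since $\alpha_P=(\beta-\alpha)/(\alpha\zeta-\gamma)\in(0,1)$ under \eqref{eqn:ConditionsOsc}, but it is harmless because it lies on the repelling sliding segment, which your crossing orbits never reach; \emph{(iii)} for the limit-cycle (isolation) claim, use the monotone-iteration/analyticity route, because the crossing factors $|F_1.H|/|F_2.H|$ at the downward crossing and $|F_2.H|/|F_1.H|$ at the upward crossing both exceed $1$, so negative divergence alone does not give $\Pi'(N^*)<1$.
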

\begin{proof}
We will construct a Poincar\'e map with the cross-section $C=1$. Pick an initial value $N_0$ with $C(N_0) = 1$ and define the map iteratively as follows. Having a point $N_i$ with $i\in\mathbb{N}$ we define $N_{i+1/2}$ by the $C^-$ flow, that is, $C^-(N_i) = C^-(N_{i+1/2}) = 1$. Next, we map $N_{i+1/2}$ to $N_{i+1}$ with the flow $C^+$ with $C^+(N_{i+1}) = C^-(N_{i+1}) = 1$. That is, $N_i$ and $N_{i+1/2}$ are the initial conditions for the flows \eqref{eqn:FlowMinus} and \eqref{eqn:FlowPlus}, respectively. These conditions are then mapped to $N_{i+1/2}$ and $N_{i+1}$ by respective maps. We write this symbolically as
\begin{equation}
    F(N_i) = N_{i+1}, \quad F(N_0) = N_{1/2}.
\end{equation}
Using both flows \eqref{eqn:cycle_lower_part} and \eqref{eqn:cycle_upper_part}, we will show that the map $F$ has a fixed point. 

First, observe that the singularities of the flow equations are removable. We will show that $C^-(N)$ has a finite limit as $N\rightarrow 0^+$ even though in the governing ODE \eqref{eqn:cycle_lower_part} one has a singularity. Consider the integral in \eqref{eqn:FlowMinus} that can be written as
\begin{equation}
    \int_{1}^{\frac{N}{N_0}} s^{-\alpha - 1} e^{-\alpha N_0 s} ds = - \int_{\frac{N}{N_0}}^1 s^{-\alpha - 1} e^{-\alpha N_0 s} ds \sim - \int_{\frac{N}{N_0}}^1 s^{-\alpha - 1} ds = \frac{1-\left(\frac{N}{N_0}\right)^{-\alpha}}{\alpha} \quad \text{as} \quad N\rightarrow 0^+,
\end{equation}
where the asymptotic equivalence is valid because $e^{-\alpha N_0 s}$ is regular at the origin. Therefore, we have
\begin{equation}
    C^-(N) \sim \left( \frac{N}{N_0}\right)^{\alpha} e^{\alpha(N - N_0)}\left(1 - \frac{\beta}{\alpha} e^{\alpha N_0} \left(1-\left(\frac{N}{N_0}\right)^{-\alpha}\right)\right) \sim \frac{\beta}{\alpha} e^{\alpha N} \sim \frac{\beta}{\alpha}.
\end{equation}
Therefore, we see that $C^-(N) \rightarrow \beta/\alpha$ when $N\rightarrow 0^+$. This result can also be confirmed from equation \eqref{eqn:cycle_lower_part} in which we have a $0/0$ expression that forces the numerator to vanish at the limit of $C=\beta/\alpha$. A completely similar reasoning shows that $C^+(N) \rightarrow (\beta+\gamma)/(\alpha(1+\zeta))$ as $N\rightarrow \zeta^-$.

Now, we will show that by choosing the appropriate value of $N_0$, the Poicar\'e iterations converge to a fixed point. Let $N_0 \geq (\beta+\gamma)/\alpha > \beta/\alpha-1$ be positive by our assumption. Then, by equation \eqref{eqn:cycle_lower_part} we have $dC^-/dN < 0$ at that point. Therefore, initially the concentration of calcium ions decreases monotonically from $C=1$ (until $N = \beta/\alpha -1$ where it attains a minimum). Since we have $C^-(0) = \beta/\alpha > 1$, the function $C(N)$ has to increase through $C=1$. Therefore, there exists a point $0<N_{1/2} < \beta/\alpha-1$ such that $C^-(N_{1/2}) = 1$. Now, the flow $C^+$ is increasing at $N_{1/2}$ due to equation \eqref{eqn:cycle_upper_part}. Hence, we have $C^+ > 1$ in the neighborhood of $N_{1/2}$ and the derivative vanishes at $N = (\beta+\gamma)/\alpha-1$. Because, by the assumption and the exact value of the flow at $N\rightarrow \zeta^{-}$, we have $C^+(\zeta) = (\beta+\gamma)/(\alpha(1+\gamma)) < 1$. Therefore, there exists a point $(\beta+\gamma)/\alpha-1 < N_1 < \zeta$ such that $C^+(N_1) = 1$. This completes one iteration of $F$. Inductively, we define all the following for all $i > 1$. In Fig. \ref{fig:PoincareCrossSection} we have diagrammatically depicted this construction. 

By the construction described above, we see that the compact set $[(\beta+\gamma)/\alpha-1, \zeta]$ is mapped by the continuous function $F$ to itself. Using the standard argument of the intermediate-value theorem for $N \mapsto F(N) - N$ we show that there exists a point $N^*$ such that $F(N^*) = N^*$. Therefore, the Poincar\'e map has a fixed point, and hence there exists a limit cycle of the vector field $(C,N)$. This completes the proof. 
\end{proof}

\begin{figure}
\color{black}
\centering
\begin{tikzpicture}[
    point/.style={circle, fill=black, inner sep=1.2pt},
    font=\small,
    scale = 1.5
    ]
    \draw[->, thick] (-1, 0) -- (8, 0) node[right] {$N$};

    \node (D) at (0,0) [point, label=below:{$0$}]{};
    \node (A) at (2,0) [point, label=below:{$\frac{\beta}{\alpha}-1$}]{};
    \node (B) at (4,0) [point, label=below:{$\frac{\beta+\gamma}{\alpha}-1$}]{};
    \node (C) at (7,0) [point, label=below:{$\zeta$}]{};

    \node (N_half) at (1,0) [point, label=above:{$N_{i+\frac{1}{2}}$}]{};
    
    \node (N_i) at (6,0) [point, label=above:{$N_i$}]{};
    \node (N_i1) at (5,0) [point, label=above:{$N_{i+1}$}]{};
    
    \draw[->, bend left=45, thick] (N_i.south) to node[below, midway] {$C^-$} (N_half.south);

    \draw[->, bend left=45, thick] (N_half.north) to node[above, midway] {$C^+$} (N_i1.north);

\end{tikzpicture}
\caption{A diagram of the Poincar\'e map. }
\label{fig:PoincareCrossSection}
\end{figure}
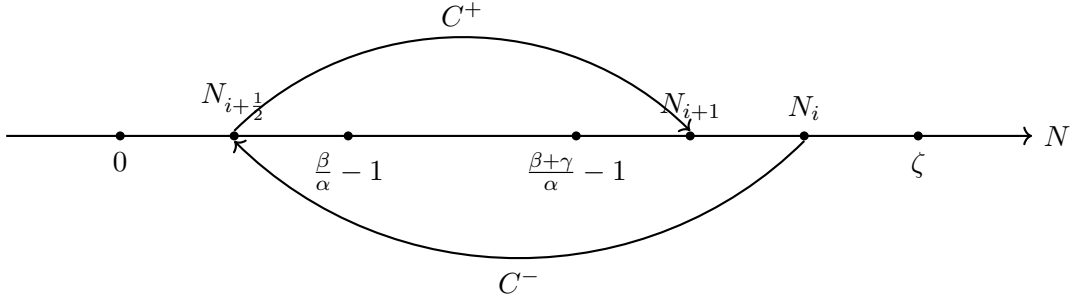

\subsection{Bifurcations}
\begin{definition}\cite{bernardo2008piecewise}
    The piecewise-smooth Filippov system undergoes \textbf{boundary equilibrium bifurcation} at $\mu=\mu^*$ with respect to field $F_i$, $i=1, 2$ if there exists a point $x^*$ such that:
    \begin{enumerate}
        \item $F_i(x^*,\mu^*)=0, but F_j (x^*,\mu^*) \neq 0.$
        \item $H(x^*,\mu^*)=0$.
        \item $F_{i,x}(x^*,\mu^*)$ is invertible.
        \item $H_{\mu}(x^*,\mu^*) - H_x(x^*,\mu^*)[F_{i,x}^{-1} F_{\mu}^{i}](x^*,\mu^*)\neq 0.$
    \end{enumerate}
\end{definition}

\begin{theorem}
    The system \eqref{eq:alternative_scaling_system} undergoes boundary equilibrium bifurcation at $\beta^*=\alpha$ with respect to field $F_1$ for $x^*=\left(0, \frac{\beta}{\alpha} \right)$.
\end{theorem}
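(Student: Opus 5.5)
We verify the four conditions of the boundary equilibrium bifurcation definition directly, with $\beta$ as the bifurcation parameter $\mu$, $\mu^*=\beta^*=\alpha$, and $x^*=(0,\beta^*/\alpha)=(0,1)$. Coordinates are ordered $x=(N,C)^T$, as in the definitions of $F_1$ and $F_2$. All ingredients are already available: the explicit fields $F_1,F_2$, the equilibrium $P_{st}^-$ of \eqref{eqn:critical_point_minus}, and the Jacobian $F_{1,x}$ computed above. The proof is therefore a sequence of explicit evaluations.

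First we handle conditions 1 and 2. We compute $F_1(x^*,\beta^*)=(-0,\,-\alpha(0+1)\cdot 1+\alpha)^T=0$. For the other field we get $F_2(x^*,\beta^*)=(\zeta,\,\gamma)^T$, which is nonzero because $\zeta>0$ and $\gamma>0$ (see Tab. \ref{tab:NondimensionalParameters}); strictly, $\gamma>0$ alone suffices. Condition 2 holds because $H(x^*)=C-1=0$; this is the statement that $P_{st}^-$ is a boundary equilibrium exactly when $\beta=\alpha$, as already noted after \eqref{eqn:critical_point_minus}.

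Next we handle condition 3. At $x^*$ the Jacobian is lower triangular,
\begin{equation}
F_{1,x}(x^*,\beta^*)=\begin{pmatrix} -1 & 0\\ -\alpha & -\alpha\end{pmatrix},
\end{equation}
with determinant $\alpha\neq 0$, since $\alpha=k_{Ca}^-/k_{NO}^->0$. Hence it is invertible, with inverse
\begin{equation}
F_{1,x}^{-1}=\frac{1}{\alpha}\begin{pmatrix} -\alpha & 0\\ \alpha & -1\end{pmatrix}=\begin{pmatrix} -1 & 0\\ 1 & -\tfrac{1}{\alpha}\end{pmatrix}.
\end{equation}

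The remaining and only substantive step is condition 4, the transversality condition. Since $H=C-1$ does not depend on $\beta$, we have $H_\mu=0$ and $H_x=(0,1)$. We also have $F^1_\mu=\partial_\beta F_1=(0,1)^T$. Then $F_{1,x}^{-1}F^1_\mu=(0,-1/\alpha)^T$, so the left-hand side of condition 4 equals $0-(0,1)\cdot(0,-1/\alpha)^T=1/\alpha\neq 0$. As a sanity check, this agrees with the geometric meaning of the condition: the equilibrium $P_{st}^-=(0,\beta/\alpha)$ crosses $\Sigma$ with speed $\frac{d}{d\beta}H(P_{st}^-)=1/\alpha$.

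The main obstacle is bookkeeping rather than analysis. The sign conventions and the ordering $(N,C)$ must be kept consistent with how the Jacobian was written. We must also read the theorem's $x^*=(0,\beta/\alpha)$ as evaluated at $\beta=\beta^*$. Finally, we should remark that the point lies on the boundary $N=0$ of $\mathbb{R}^2_+$. This causes no difficulty, since the fields are smooth polynomials on a neighbourhood of $x^*$ and the definition only involves pointwise derivatives.
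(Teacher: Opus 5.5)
Your proposal is correct and follows the paper's proof essentially verbatim: both verify the four conditions of the definition by direct evaluation at $x^*=(0,1)$, $\beta^*=\alpha$, obtaining $F_1(x^*,\beta^*)=0$, $F_2(x^*,\beta^*)=(\zeta,\gamma)^T\neq 0$, $\det F_{1,x}=\alpha\neq 0$, and the transversality quantity $H_\mu - H_x F_{1,x}^{-1}F^1_\mu = 1/\alpha\neq 0$. Two of your remarks go slightly beyond what the paper writes and are both correct: that $\gamma>0$ is what justifies $F_2(x^*,\beta^*)\neq 0$, and that $1/\alpha$ equals the rate $\frac{d}{d\beta}H(P_{st}^-)$ at which the equilibrium crosses $\Sigma$.
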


\begin{proof}
    Let $x^*=\left(0, \frac{\beta}{\alpha}\right)$ and $\beta^*=\alpha$. 
    Then:
    \begin{enumerate}
        \item $F_1(x^*, \mu^*) = \left(0, -\alpha \frac{\beta}{\alpha} + \beta\right) = 0 $ and $F_2(x^*, \mu^*)=(\zeta, \gamma) \neq 0.$
        \item $H(x^*, \mu^*)=\frac{\beta}{\alpha} - 1 = 1-1 = 0.$
        \item $F_{1,x} = \begin{pmatrix}
        -1 & 0 \\
        -\alpha C & -\alpha N - \alpha
        \end{pmatrix}$ and $\det(F_{1,x})(x^*, \mu^*) = \alpha \neq 0.$
    \item We have: 
    \begin{itemize}
        \item $H_{\mu}(x^*, \mu^*) = 0$,
        \item $H_x(x^*, \mu^*) = [0, 1]^T$,
        \item $F_{1, x}^{-1}(x^*, \mu^*) = \begin{pmatrix}
        -1 & 0 \\
        \frac{\beta}{\alpha} & -\frac{1}{\alpha}
        \end{pmatrix} = \begin{pmatrix}
        -1 & 0 \\
        1 & -\frac{1}{\alpha}
        \end{pmatrix}$
        \item $F_{1, \mu} = [0, 1]^T.$
    \end{itemize}
    Hence:
    \begin{equation}
        H_{\mu}(x^*,\mu^*) - H_x(x^*,\mu^*)[F_{1,x}^{-1} F_{1, \mu}](x^*,\mu^*) =\frac{1}{\alpha} \neq 0.
    \end{equation}
    \end{enumerate}
    Therefore, all the sufficient conditions are satisfied.
\end{proof}

\begin{theorem}
    The system \eqref{eq:alternative_scaling_system} undergoes a boundary equilibrium bifurcation at $\beta^*=\alpha(\zeta+1) - \gamma$ with respect to field $F_2$ for $x^*=\left(\zeta, \frac{\beta + \gamma}{\alpha (\zeta + 1)}\right)$.
\end{theorem}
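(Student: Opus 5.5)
We verify directly the four conditions in the definition of boundary equilibrium bifurcation, taking the bifurcation parameter to be $\mu=\beta$. The argument runs parallel to the preceding theorem for $F_1$. Set $x^*=\bigl(\zeta,\tfrac{\beta+\gamma}{\alpha(\zeta+1)}\bigr)$, which is the equilibrium $P_{st}^{+}$ from \eqref{eqn:critical_point_plus}, and $\beta^*=\alpha(\zeta+1)-\gamma$.

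\emph{Condition 1.} Substituting $N=\zeta$ gives first component $-\zeta+\zeta=0$. The second component is $-\alpha(\zeta+1)\frac{\beta+\gamma}{\alpha(\zeta+1)}+\beta+\gamma=0$, so $F_2(x^*,\mu^*)=0$ for any $\beta$. At $\beta=\beta^*$ the point is $x^*=(\zeta,1)$, and then
\[
F_1(x^*,\mu^*)=\bigl(-\zeta,\;-\alpha(\zeta+1)+\beta^*\bigr)=(-\zeta,-\gamma).
\]
This is nonzero provided $\zeta>0$ or $\gamma>0$, which holds for the physical parameters (both are positive).

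\emph{Condition 2.} We have $H(x^*,\mu^*)=\frac{\beta^*+\gamma}{\alpha(\zeta+1)}-1=0$ by the choice of $\beta^*$.

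\emph{Condition 3.} The Jacobian $F_{2,x}$ at $x^*$ is lower triangular with diagonal entries $-1$ and $-\alpha(\zeta+1)$. Its determinant is $\alpha(\zeta+1)\neq 0$, consistent with the eigenvalues computed earlier. Its inverse is
\[
F_{2,x}^{-1}=\begin{pmatrix}-1&0\\ \frac{C^*}{\zeta+1}&-\frac{1}{\alpha(\zeta+1)}\end{pmatrix},\qquad C^*=1 .
\]

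\emph{Condition 4.} Since $H=C-1$ does not depend on $\beta$, we have $H_\mu=0$. We also have $H_x=(0,1)$ and $F_{2,\mu}=\partial_\beta F_2=(0,1)^T$. Then $F_{2,x}^{-1}F_{2,\mu}=\bigl(0,-\tfrac{1}{\alpha(\zeta+1)}\bigr)^T$, and hence
\[
H_\mu-H_x\,[F_{2,x}^{-1}F_{2,\mu}]=\frac{1}{\alpha(\zeta+1)}\neq 0 .
\]
This is the transversality condition: $\partial_\beta$ of the $C$-coordinate of $P_{st}^{+}$ is $1/(\alpha(\zeta+1))$, so the equilibrium crosses $\Sigma$ with nonzero speed.

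The only point needing care is the nondegeneracy $F_1(x^*,\mu^*)\neq 0$ in condition 1. It holds because $\zeta>0$; even if $\zeta=0$, it would still hold because $\gamma>0$. With all four conditions verified, the theorem follows.
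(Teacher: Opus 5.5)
Your proposal is correct and takes the same approach as the paper: you verify the four conditions of the boundary equilibrium bifurcation definition with $\mu=\beta$, and you get the same quantities, namely $F_1(x^*,\mu^*)=(-\zeta,-\gamma)\neq 0$, $\det F_{2,x}=\alpha(\zeta+1)$, the same inverse Jacobian, and the transversality value $1/(\alpha(\zeta+1))$. Two remarks go slightly beyond the paper and are clarifications only: invoking $\zeta,\gamma>0$ to justify condition 1, and interpreting condition 4 as the $C$-coordinate of $P_{st}^{+}$ crossing $\Sigma$ with nonzero speed.
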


\begin{proof}
    Let $x^*=\left(\zeta, \frac{\beta + \gamma}{\alpha (\zeta + 1)}\right)$ and $\beta^*=\alpha(\zeta+1) - \gamma$. 
    Then:
    \begin{enumerate}
        \item $F_1(x^*, \mu^*) = \left(-\zeta, -\gamma\right) \neq 0 $ and $F_2(x^*, \mu^*)= 0.$
        \item $H(x^*, \mu^*)=\frac{\beta +\gamma}{\alpha (\zeta + 1)} - 1 = 1-1 = 0.$
        \item $F_{2,x} = \begin{pmatrix}
        -1 & 0 \\
        -\alpha C & -\alpha N - \alpha
        \end{pmatrix}$ and $\det(F_{2,x})(x^*, \mu^*) = \alpha (\zeta + 1) \neq 0.$
    \item We have: 
    \begin{itemize}
        \item $H_{\mu}(x^*, \mu^*) = 0$,
        \item $H_x(x^*, \mu^*) = [0, 1]^T$,
        \item $F_{2, x}^{-1}(x^*, \mu^*) = \begin{pmatrix}
        -1 & 0 \\
        \frac{1}{\zeta + 1} & -\frac{1}{\alpha(\zeta + 1)}
        \end{pmatrix}$
        \item $F_{2, \mu} = [0, 1]^T.$
    \end{itemize}
    Hence:
    \begin{equation}
        H_{\mu}(x^*,\mu^*) - H_x(x^*,\mu^*)[F_{2,x}^{-1} F_{2, \mu}](x^*,\mu^*) = \frac{1}{\alpha(\zeta + 1)} \neq 0.
    \end{equation}
    \end{enumerate}
    All the sufficient conditions are satisfied.
\end{proof}
Let $x$ be a regular equilibrium of $F_1$. We can linearize the system about the boundary equilibrium point, and write for $F_1$:
\begin{equation}
    N(x-x^*) + M(\mu-\mu^*) = 0
\end{equation}
\begin{equation}
    C^T(x-x^*) + D(\mu-\mu^*) = \lambda_1 <0,
\end{equation}
where $N=F_{1, x}$, $M=F_{1, \mu}$, $C^T=H_x$, $D=H_{\mu}$, all evaluated at $x=x^*$, $\mu=\mu^*$. 
Additionally, for pseudo-equilibrium $\tilde{x}$ we have:
\begin{equation}
    N(\tilde{x}-x^*) + M(\mu-\mu^*)+E\alpha_{P} = 0
\end{equation}
\begin{equation}
    C^T(\tilde{x}-x^*)+D(\mu-\mu^*)=0
\end{equation}
\begin{equation}
    \alpha_P > 0,
\end{equation}
where $E=F_2-F_1$ evaluated at $x=x^*$, $\mu=\mu^*$.

\begin{theorem}\cite{bernardo2008piecewise}
\label{thm:bifurcation_type}
For the systems of interest, assuming
\begin{equation}
    \det(N) \neq 0,
\end{equation}
\begin{equation}
    D - C^T N^{-1} M \neq 0,
\end{equation}
\begin{equation}
    C^T N^{-1} E \neq 0.
\end{equation}
\begin{enumerate}
\item \textbf{Persistence} is observed at the boundary equilibrium bifurcation point if
\begin{equation}
    C^T N^{-1} E > 0.
\end{equation}
\item A \textbf{non-smooth fold} is instead observed if
\begin{equation}
    C^T N^{-1} E < 0.
\end{equation}
\end{enumerate}  
\end{theorem}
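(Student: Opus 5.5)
This is the classical persistence/non-smooth-fold classification of boundary equilibrium bifurcations for planar Filippov systems (di Bernardo et al.). The plan is to solve the two linearized branch problems written just before the statement explicitly to first order in $\mu-\mu^*$. We then compare on which side of the bifurcation value $\mu^*$ each branch is admissible.

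\textbf{Regular equilibrium branch.} Since $\det(N)\neq 0$, the first equation gives
\[
x-x^*=-N^{-1}M(\mu-\mu^*),
\]
and hence
\[
H(x,\mu)\approx C^T(x-x^*)+D(\mu-\mu^*)=(D-C^TN^{-1}M)(\mu-\mu^*).
\]
By the nondegeneracy assumption $D-C^TN^{-1}M\neq 0$, this quantity changes sign at $\mu^*$. So the equilibrium of $F_1$ is admissible (i.e. $H<0$) exactly on one side of $\mu^*$, namely where $\operatorname{sign}(\mu-\mu^*)=-\operatorname{sign}(D-C^TN^{-1}M)$.

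\textbf{Pseudo-equilibrium branch.} From $N(\tilde x-x^*)+M(\mu-\mu^*)+E\alpha_P=0$ we get
\[
\tilde x-x^*=-N^{-1}\bigl(M(\mu-\mu^*)+E\alpha_P\bigr).
\]
Substituting into the constraint $C^T(\tilde x-x^*)+D(\mu-\mu^*)=0$ gives
\[
(D-C^TN^{-1}M)(\mu-\mu^*)=C^TN^{-1}E\,\alpha_P .
\]
Because $C^TN^{-1}E\neq0$, this determines $\alpha_P$ uniquely to leading order. Admissibility requires $\alpha_P>0$ (and $\alpha_P<1$, which holds automatically near $\mu^*$ since $\alpha_P\to0$). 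That is, $\operatorname{sign}(\mu-\mu^*)=\operatorname{sign}(C^TN^{-1}E)\cdot\operatorname{sign}(D-C^TN^{-1}M)$.

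\textbf{Comparison of the two branches.}
\begin{itemize}
\item If $C^TN^{-1}E>0$, the pseudo-equilibrium is admissible on the side opposite to the regular equilibrium. A single equilibrium therefore persists through $\mu^*$, passing from regular to pseudo: this is \emph{persistence}.
\item If $C^TN^{-1}E<0$, both branches are admissible on the same side of $\mu^*$, coalesce at $x^*$, and disappear on the other side: this is a \emph{non-smooth fold}.
\end{itemize}

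\textbf{Main obstacle.} The delicate step is justifying that the linear predictions persist for the full nonlinear system. For the regular branch this follows from the implicit function theorem applied to $F_1(x,\mu)=0$, using $\det N\neq0$. For the pseudo-equilibrium, I would apply the implicit function theorem to the augmented map
\[
(x,\alpha_P,\mu)\mapsto\bigl(F_1+\alpha_P(F_2-F_1),\,H\bigr).
\]
Its Jacobian in $(x,\alpha_P)$ is $\begin{pmatrix}N&E\\C^T&0\end{pmatrix}$, whose determinant equals $-\det(N)\,C^TN^{-1}E$ (Schur complement), and this is nonzero by hypothesis. The higher-order terms then do not affect the signs computed above for $\mu$ close to $\mu^*$. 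The case with $F_2$ as the bifurcating field follows by exchanging the roles of $F_1$ and $F_2$: $E$ changes sign, but so does the admissibility side of $H$, so the criterion is unchanged.
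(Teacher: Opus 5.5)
Your argument is correct and is essentially the standard proof from the cited source (di Bernardo et al.). The paper quotes that result without proof, after writing down exactly the linearized regular- and pseudo-equilibrium systems you use: eliminating $x-x^*$ gives $H=(D-C^TN^{-1}M)(\mu-\mu^*)$ on the regular branch and $(D-C^TN^{-1}M)(\mu-\mu^*)=C^TN^{-1}E\,\alpha_P$ on the pseudo-equilibrium branch, and comparing the sides of $\mu^*$ on which each is admissible yields persistence or a non-smooth fold according to the sign of $C^TN^{-1}E$. Your implicit-function-theorem step, using the bordered Jacobian with determinant $-\det(N)\,C^TN^{-1}E\neq 0$, is a sound addition that makes the paper's purely linear setup rigorous.
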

In our case, we have:
\begin{equation}
    N = \begin{pmatrix}
        -1 & 0 \\
        -\alpha & -{\alpha}
        \end{pmatrix},
\end{equation}
\begin{equation}
    det(N) = \alpha \neq 0,
\end{equation}
\begin{equation}
    D - C^T N^{-1} M = 0 - [0, 1] \begin{pmatrix}
        -1 & 0 \\
        1 & -\frac{1}{\alpha}
        \end{pmatrix} [0, 1]^T = \frac{1}{\alpha} \neq 0,
\end{equation}
\begin{equation}
    C^T N^{-1} E = [0, 1] \begin{pmatrix}
        -1 & 0 \\
        1 & -\frac{1}{\alpha}
        \end{pmatrix} [\zeta, \gamma]^T = \zeta - \frac{\gamma}{\alpha}.
\end{equation}
The bifurcation type therefore depends on the sign of $\zeta - \frac{\gamma}{\alpha}$. If $\gamma < \alpha \zeta$, we observe persistance. If $\gamma > \alpha \zeta$, we observe a non-smooth fold.
We set $\beta$ as the bifurcation parameter. Figure \ref{fig:equilibria_types} presents the types of equilibria for $F_1$ and $F_2$ depending on the parameters $\gamma$ and $\beta$. The red vertical line $\beta=\alpha$ represents the boundary equilibrium of $F_1$. Analogously, the decreasing linear function corresponds to the boundary equilibrium of $F_2$ given by the formula $\beta=\alpha(\zeta+1) - \gamma$.
\begin{figure}
\begin{tikzpicture}[scale=1.2,
    every path/.style={draw=black},
    every node/.style={text=black}
]

\def\xmax{8.5}   
\def\ymax{5.5}   
\def\xalpha{3.2} 

\def\ystart{4.6} 
\def\xint{7.0}   

\fill[red!20]                   
    (\xalpha,0) --             
    (\xalpha, { \ystart*(1 - \xalpha/\xint) }) --  
    (\xint,0) -- cycle;         

\draw[->, thick, draw=black] (0,0) -- (\xmax,0) node[right] {$\beta$};
\draw[->, thick, draw=black] (0,0) -- (0,\ymax) node[above] {$\gamma$};

\node[below, text=black] at (\xmax-1,0) {$\alpha(\zeta + 1)$};
\node[left,  text=black] at (0,\ymax-1) {$\alpha(\zeta + 1)$};

\draw[thick, draw=black] (0,\ystart) -- (\xint,0)
  node[pos=0.55, sloped, above, text=black] {$E^{2}_{\!B}$};

\draw[red, thick] (\xalpha,0) -- (\xalpha,\ymax-0.6);

\node[below, text=red] at (\xalpha,0) {$\alpha$};

\node[text=red] at ({\xalpha+0.35}, {(\ystart + (0 - \ystart)*(\xalpha/\xint)) + 1.75}) {$E^{1}_{\!B}$};

\node[text=black] at (0.5,4.7) {$E^{1}_{\!R}$};
\node[text=black] at (1.0,4.7) {$E^{2}_{\!R}$};

\node[text=black] at (5.1,4.7) {$E^{1}_{\!V}$};
\node[text=black] at (5.6,4.7) {$E^{2}_{\!R}$};

\node[text=black] at (0.5,0.35) {$E^{1}_{\!R}$};
\node[text=black] at (1.0,0.35) {$E^{2}_{\!V}$};

\node[text=black] at (4.5,0.35) {$E^{1}_{\!V}$};
\node[text=black] at (5.0,0.35) {$E^{2}_{\!V}$};
\end{tikzpicture}

\caption{Equilibria types for different values of parameters $\beta$ and $\gamma$.}
    \label{fig:equilibria_types}
\end{figure}
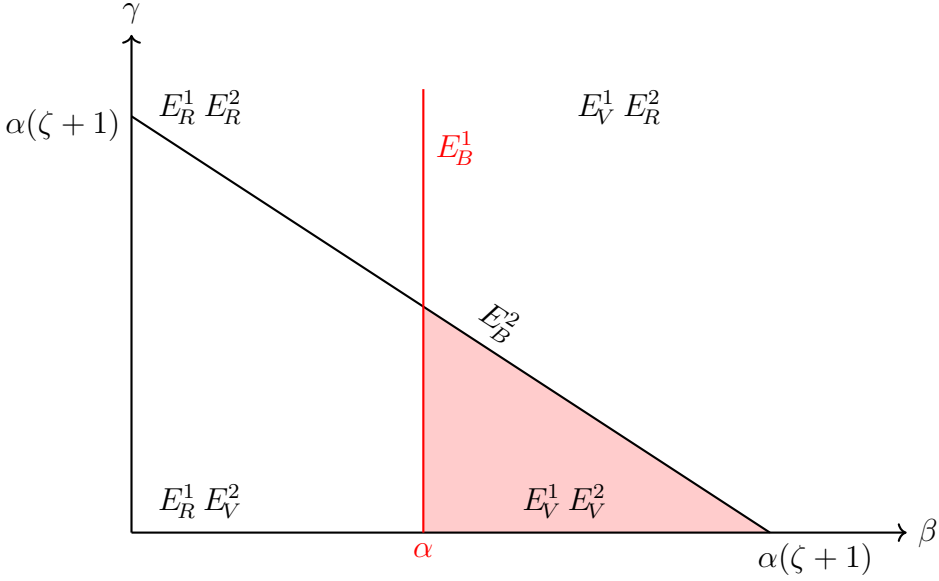
Depending on the value of the parameter $\gamma$, we observe different dynamics of the system. In our analysis, we set $\alpha=0.3$ and $\zeta=2$. When $0 < \gamma < \alpha \zeta$ (Figure \ref{fig:bifuractions_0.5}), for small values of $\beta$ we observe a regular equilibrium of $F_1$ coexisting with a virtual equilibrium of $F_2$. Increasing $\beta$ leads to boundary crossing and then two virtual equilibria. It represents the oscillation case. Then the boundary equilibrium of $F_2$ is reached and finally we observe the virtual equilibrium of $F_1$ and the regular equilibrium of $F_2$. 
\begin{figure}
    \centering
    \includegraphics[width=\linewidth]{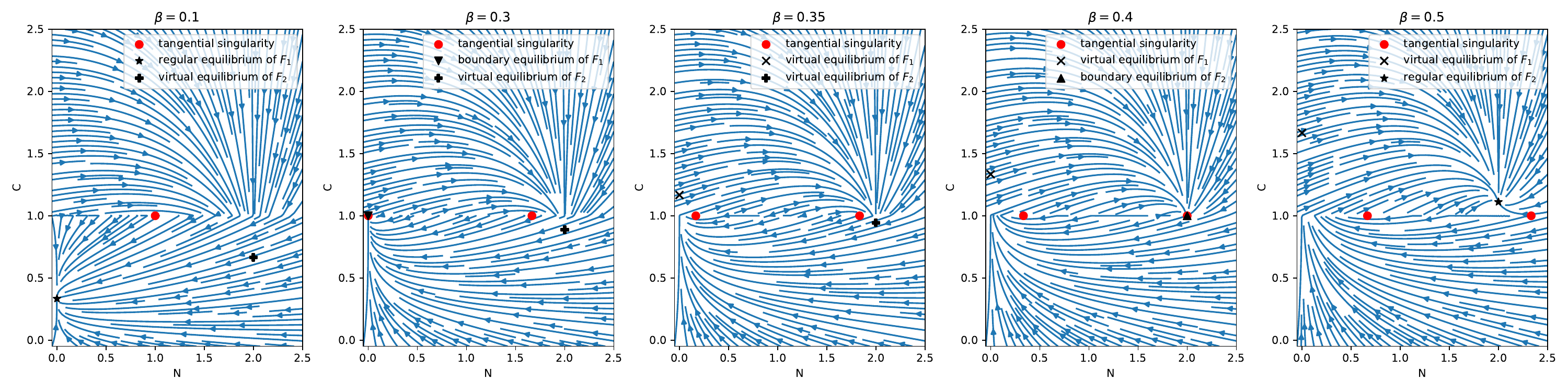}
    \caption{Bifurcations observed for $\gamma=0.5$ and varying values of parameter $\beta$.}
    \label{fig:bifuractions_0.5}
\end{figure}
 When $\gamma=\alpha \zeta$ we have a unique situation when the regular equilibrium of $F_1$ and the virtual equilibrium of $F_2$ change to the regular equilibrium of $F_2$ and the virtual equilibrium of $F_1$. In the intermediate phase, for $\beta=\alpha$, we have two boundary equilibria that exist together. 
\begin{figure}
    \centering
    \includegraphics[width=\linewidth]{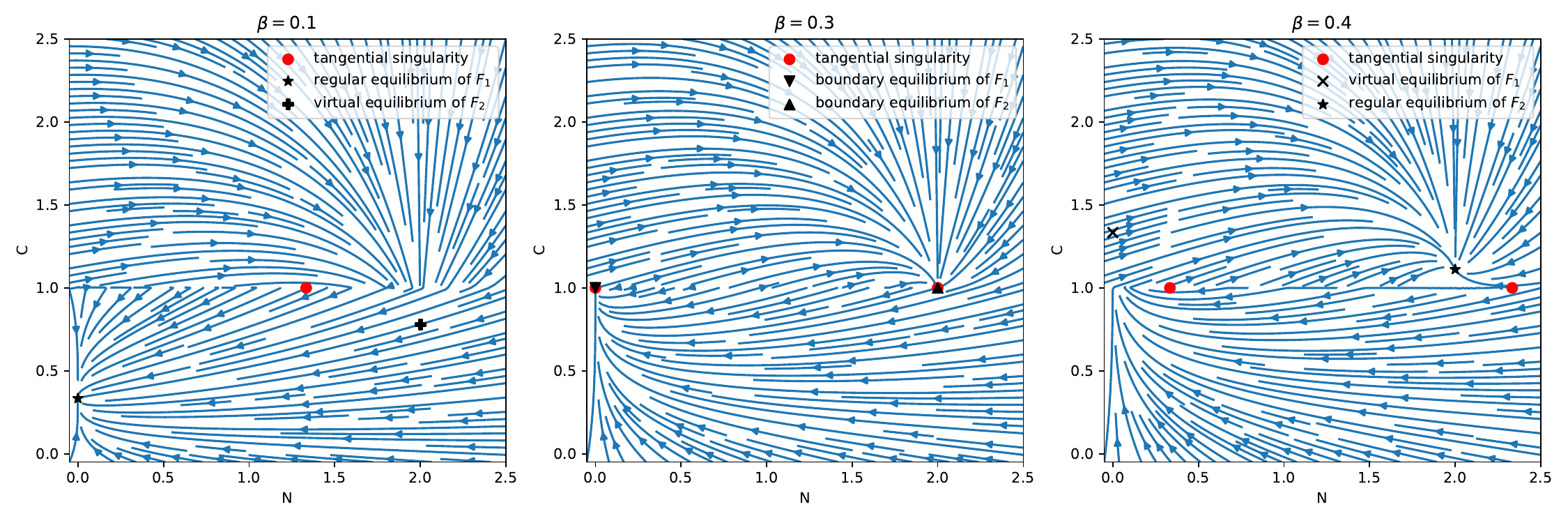}
    \caption{Bifurcations observed for $\gamma=0.6$ and varying values of parameter $\beta$.}
    \label{fig:bifuractions_2}
\end{figure}
For $\alpha \zeta < \gamma < \alpha(\zeta + 1)$ we again observe the transition from the regular equilibrium of $F_1$ and the virtual equilibrium of $F_2$ to the regular equilibrium of $F_2$ and the virtual equilibrium of $F_1$. Compared to the considerations for $\gamma < \alpha \zeta$, when two virtual equilibria coexisted, we had two regular equilibria in the intermediate stage. 
\begin{figure}
    \centering
    \includegraphics[width=\linewidth]{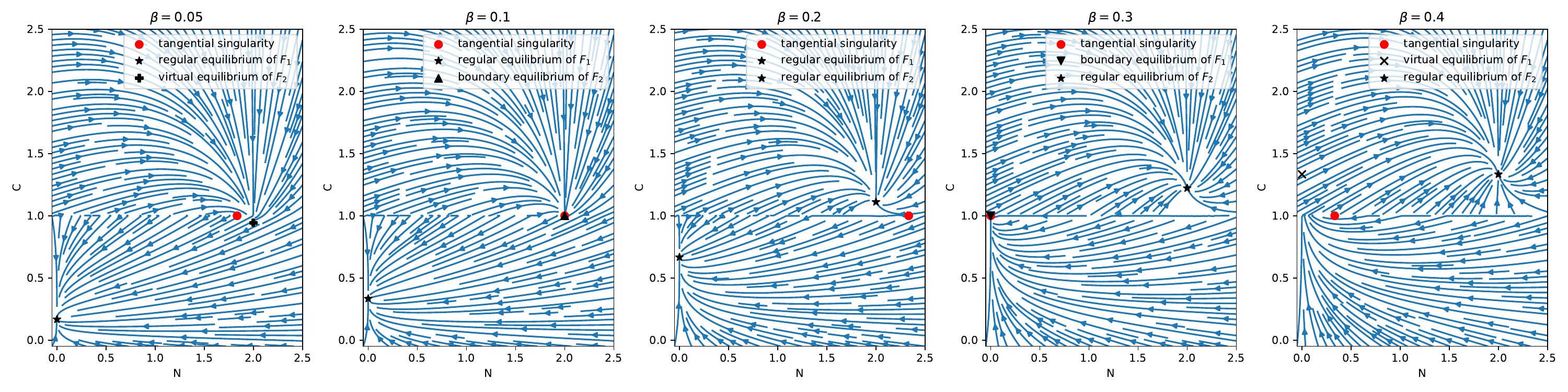}
    \caption{Bifurcations observed for $\gamma=0.8$ and varying values of parameter $\beta$.}
    \label{fig:bifuractions_2.5}
\end{figure}
Finally, for $\alpha (\zeta + 1) < \gamma$ and small values of $\beta$, we observe two regular equilibria. Then, through BEB, equilibrium of $F_1$ becomes vitual. 
\begin{figure}
    \centering
    \includegraphics[width=\linewidth]{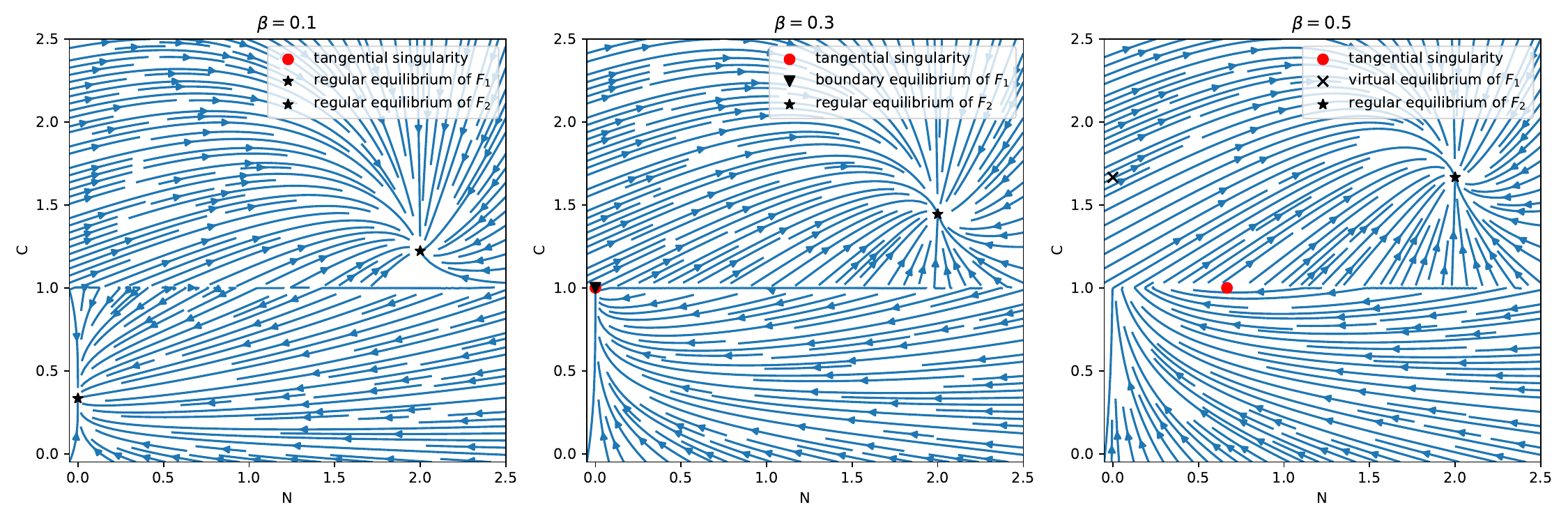}
    \caption{Bifurcations observed for $\gamma=1$ and varying values of parameter $\beta$.}
    \label{fig:bifuractions_3.5}
\end{figure}

\section{Conclusion and future work}
In this work, we have established a comprehensive mathematical framework that unifies the hemodynamic and lymphodynamic transport phenomena within biological vessels. By rigorously deriving the governing partial differential equations through asymptotic perturbation methods, we provided a fluid-mechanical description that accounts for the distinct rheological properties of both blood and lymph. A central contribution of this study is the integration of active biochemical regulation into the fluid dynamic model. By coupling the PDE system with non-smooth ordinary differential equations describing the kinetics of calcium ions ($Ca^{2+}$) and nitric oxide ($NO$), we captured the feedback loops inherent in lymphangion valve activation. We identified specific parameter regimes where the system exhibits limit cycles. The existence of these oscillations in the non-smooth formulation shows the robustness of the physiological mechanism and provides a deterministic explanation for the rhythmic contractions observed in vivo. Ultimately, this framework serves as a foundational step toward a more predictive and mechanically grounded understanding of the lymphatic system.

Future work will focus on developing an analysis of the coupled flow-biochemistry model along with designing efficient numerical methods to solve the main nonlinear (and possibly) nonlocal system of differential equations. 

\bibliographystyle{plain} 
\bibliography{bibliography}

\end{document}